\documentclass[11pt]{article}
\usepackage[utf8]{inputenc}
\usepackage{amsthm}
\usepackage{amsfonts}
\usepackage{amsmath}
\usepackage{amssymb}
\usepackage{colonequals}
\usepackage{color}
\usepackage{epsfig}
\usepackage{url}
\usepackage{hyperref}
\newtheorem{theorem}{Theorem}
\newtheorem{problem}[theorem]{Problem}
\newtheorem{corollary}[theorem]{Corollary}
\newtheorem{lemma}[theorem]{Lemma}

\newcommand{\eps}{\varepsilon}
\newcommand{\ch}{\text{ch}}

\newcommand{\brm}[1]{\operatorname{#1}}

\newenvironment{subproof}{%
  \begin{proof}[Subproof]%
}{%
  \end{proof}%
}

\usepackage{xcolor}
\usepackage[textsize=footnotesize]{todonotes}
\usepackage{xspace}							


\title{Flexibility of triangle-free planar graphs\thanks{Work on this paper was supported by project 17-04611S (Ramsey-like aspects of graph coloring) of Czech Science Foundation.}}
\author{Zden\v{e}k Dvo\v{r}\'ak\thanks{Computer Science Institute (CSI), Charles University in Prague.
E-mail: \protect\href{mailto:rakdver@iuuk.mff.cuni.cz}{\protect\nolinkurl{rakdver@iuuk.mff.cuni.cz}}.}\and
Tom\'a\v{s} Masa\v{r}\'ik\thanks{Department of Applied Mathematics, Charles University in Prague.
E-mail: \protect\href{mailto:masarik@kam.mff.cuni.cz}{\protect\nolinkurl{masarik@kam.mff.cuni.cz}}. Partially supported by the project SVV–2017–260452.}\and
Jan Mus\'ilek\thanks{Department of Applied Mathematics, Charles University in Prague.
E-mail: \protect\href{mailto:stinovlas@kam.mff.cuni.cz}{\protect\nolinkurl{stinovlas@kam.mff.cuni.cz}}.}\and
Ond\v{r}ej Pangr\'ac\thanks{Computer Science Institute (CSI), Charles University in Prague.
E-mail: \protect\href{mailto:pangrac@iuuk.mff.cuni.cz}{\protect\nolinkurl{pangrac@iuuk.mff.cuni.cz}}.}}

\date{}

\begin{document}
\maketitle

\begin{abstract}
Let $G$ be a planar graph with a list assignment $L$.  Suppose a preferred color is given for some of the
vertices.  We prove that if $G$ is triangle-free and all lists have size at least four, then there exists
an $L$-coloring respecting at least a constant fraction of the preferences.
\end{abstract}

\section{Introduction}

In a proper graph coloring, we want to assign to each vertex of a graph one of a fixed number of colors
in such a way that adjacent vertices receive distinct colors.  Proper graph coloring models a number of
real-world problems related to scheduling or distributing limited resources in a way that avoids conflicts
(e.g., scheduling classes into time slots so that any two classes taught by the same teacher occur at different
times, or the compiler assigning variables to registers so that any two variables which are used at the same time
reside in different registers).

In such applications, it is common for the vertices to prefer to be colored by certain colors (e.g., teachers
may prefer to teach or not to teach at certain times, the variables may be more profitably kept in specific registers
in case some assembler instructions can only be applied to those registers).  Usually, it is not possible
to satisfy all such preferences.  This motivates the following definitions (which we present in a more general list
coloring setting, for a reason we discuss below).

A \emph{list assignment} $L$ for a graph $G$ is a function that to each vertex $v\in V(G)$ assigns
a set $L(v)$ of colors, and an \emph{$L$-coloring} is a proper coloring $\varphi$ such that $\varphi(v)\in L(v)$ for all $v\in V(G)$.
A graph $G$ is \emph{$k$-choosable} if $G$ is $L$-colorable from every assignment $L$ of lists of size at least $k$.
A \emph{weighted request} is a function $w$ that to each pair $(v,c)$ with $v\in V(G)$ and $c\in L(v)$
assigns a nonnegative real number.  Let $w(G,L)=\sum_{v\in V(G),c\in L(v)} w(v,c)$.
For $\eps>0$, we say that $w$ is \emph{$\eps$-satisfiable} if there exists an $L$-coloring $\varphi$ of $G$ such that
$$\sum_{v\in V(G)} w(v,\varphi(v))\ge\eps\cdot w(G,L).$$
An important special case is when at most one color can be requested at each vertex and all such colors have the
same weight (say $w(v,c)=1$ for at most one color $c\in L(v)$, and $w(v,c')=0$ for any other color $c'$):
A \emph{request} for a graph $G$ with a list assignment $L$ is a function $r$ with $\brm{dom}(r)\subseteq V(G)$
such that $r(v)\in L(v)$ for all $v\in\brm{dom}(r)$.  For $\eps>0$, a request $r$ is \emph{$\eps$-satisfiable}
if there exists an $L$-coloring $\varphi$ of $G$ such that $\varphi(v)=r(v)$ for at least $\eps|\brm{dom}(r)|$ vertices $v\in\brm{dom}(r)$.

In particular, a request $r$ is $1$-satisfiable if and only if the precoloring given by $r$ extends to an $L$-coloring of $G$.
The corresponding \emph{precoloring extension} problem has been studied in a number of contexts: as a tool to deal with
small cuts in the considered graph by coloring one part of the graph recursively and then extending the corresponding
precoloring of the cut vertices to the other part~\cite{arnborg1989linear,trfree7,Th2}, as a way to show that
a graph has many different colorings\cite{cylgen-part2,lukethe}, or from the algorithmic complexity perspective~\cite{latin,fiala2003np}.
In planar graphs, it is known that precoloring of any set of vertices at distance at least three from one another extends when
at least 5 colors are used~\cite{Alb98} and for sufficiently distant vertices this holds also in the list coloring setting~\cite{LidDvoMohPos};
on the other hand, a precoloring of two arbitrarily distant vertices of a planar graph does not necessarily extend to a $4$-coloring~\cite{Fisk78}.

Dvořák, Norin and Postle~\cite{requests} asked a related question: In a given class of graphs, is it always possible to satisfy at least a constant proportion
of the requests?  We say that a graph $G$ with the list assignment $L$ is \emph{$\eps$-flexible} if every request is $\eps$-satisfiable,
and it is \emph{weighted $\eps$-flexible} if every weighted request is $\eps$-satisfiable (of course, weighted $\eps$-flexibility implies $\eps$-flexibility).
Dvořák, Norin and Postle~\cite{requests} established some basic properties of the concept and proved that several interesting graph classes are flexible:
\begin{itemize}
\item For every $d\ge 0$, there exists $\varepsilon>0$ such that $d$-degenerate graphs with assignments of lists of size $d+2$ are weighted $\eps$-flexible.
\item There exists $\varepsilon>0$ such that every planar graph with assignment of lists of size $6$ is $\eps$-flexible.
\item There exists $\varepsilon>0$ such that every planar graph of girth at least five with assignment of lists of size $4$ is $\eps$-flexible.
\end{itemize}
They also raised a number of interesting questions, including the following one.
\begin{problem}\label{prob-dnp}
Does there exists $\varepsilon>0$ such that every planar graph $G$ and assignment $L$ of lists of size
\begin{itemize}
\item[(a)] five in general,
\item[(b)] four if $G$ is triangle-free,
\item[(c)] three if $G$ has girth at least five
\end{itemize}
is (weighted) $\eps$-flexible?
\end{problem}
Let us remark that planar graphs are $5$-choosable~\cite{thomassen1994} but not necessarily $4$-choosable~\cite{voigt1993},
triangle-free planar graphs are $4$-choosable by a simple degeneracy argument but not necessarily $3$-choosable~\cite{voigt1995},
and planar graphs of girth at least $5$ are $3$-choosable~\cite{ThoShortlist}.  Also, let us remark that the analogous questions
in the ordinary proper coloring setting are trivial: If all vertices of a $k$-colorable graph $G$ are assigned the same list of length $k$, then $G$
with this uniform list assignment is weighted $k^{-1}$-flexible, as is easy to see by considering the colorings of $G$
arising from a fixed $k$-coloring by permuting the colors~\cite{requests}.

We answer the part (b) of Problem~\ref{prob-dnp} in positive.
\begin{theorem}\label{thm-main}
There exists $\eps>0$ such that each planar triangle-free graph with assignment of lists of size four is weighted $\eps$-flexible.
\end{theorem}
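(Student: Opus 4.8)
The plan is to use the reducibility-and-discharging framework for flexibility of Dvořák, Norin and Postle. The first step is the standard reduction of \emph{weighted} flexibility to a probabilistic statement: it suffices to produce, for the given $G$ and $L$, a probability distribution $\mathcal{D}$ on $L$-colorings of $G$ with $\Pr_{\varphi\sim\mathcal{D}}[\varphi(v)=c]\ge\eps$ for every $v\in V(G)$ and every $c\in L(v)$. Indeed, for any weighted request $w$, linearity of expectation gives $\mathbb{E}_{\varphi\sim\mathcal{D}}\bigl[\sum_{v} w(v,\varphi(v))\bigr]=\sum_{v,c}w(v,c)\,\Pr[\varphi(v)=c]\ge\eps\cdot w(G,L)$, so some coloring in the support of $\mathcal{D}$ is $\eps$-satisfying. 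Equivalently (and this is how I would actually run the argument) one may assume $G$ is a vertex-minimal counterexample; the two viewpoints correspond under linear-programming duality.

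The engine for producing $\mathcal{D}$ is local. One looks for bounded-size induced subgraphs $F$ --- ``fixers'' --- with a distinguished vertex $v_F$, such that in \emph{every} $L$-coloring of $G$ and for \emph{every} target color $c\in L(v_F)$ the colors on $V(F)$ can be re-chosen, touching nothing outside $F$, so that $v_F$ receives $c$, and with moreover enough room that this re-choice can be randomized to put each color on $v_F$ with constant probability. The basic example is a vertex with $|L(v)|\ge\deg_G(v)+2$, i.e.\ here a vertex of degree at most $2$: at least two colors remain free at it in any coloring of $G-v$, and the few colors that can be blocked are blocked only by its (few) neighbours, which the sparsity of triangle-free planar graphs lets us keep flexible as well; more elaborate gadgets (short faces, paths or stars of low-degree vertices, a $3$-vertex with a low-degree neighbour, and so on) serve as fixers in the denser parts of $G$. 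Packing vertex-disjoint fixers and re-randomizing each independently makes all their distinguished vertices simultaneously flexible, and averaging over a bounded number of such packings that jointly cover $V(G)$ gives the global distribution $\mathcal{D}$, with $\eps$ a constant depending only on the gadget size and the covering multiplicity.

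To show that a suitable covering by fixers exists for every triangle-free planar graph with lists of size four, I would argue by contradiction via discharging. A minimal counterexample contains none of a fixed finite family of \emph{reducible configurations} --- those that carry one of the fixer reductions above, so that their presence lets us pass to a strictly smaller triangle-free planar graph, solve it by induction (after transferring the request weight), and lift the solution back --- and typical members of the family are a vertex of degree at most $2$, two adjacent $3$-vertices, a $3$-vertex incident with two short faces, and similar local patterns. Give each vertex $v$ charge $\deg_G(v)-4$ and each face $f$ charge $|f|-4$; triangle-freeness makes every face charge nonnegative, and Euler's formula makes the total charge $-8$. Redistributing charge from the large faces and high-degree vertices to the $3$-vertices, the absence of the reducible configurations should force every vertex and face to finish with nonnegative charge, the desired contradiction.

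The crux --- and the main obstacle --- is that the reducible configurations must be chosen to meet two conflicting requirements at once. On one hand each must be genuinely reducible \emph{for flexibility}, which is much stronger than ordinary precoloring-extendability: a single valid extension carries no probabilistic slack, so the reduction has to move the request weight onto the smaller graph and the lifting step has to restore constant-probability flexibility at \emph{every} vertex, including one whose own fixer was consumed to satisfy a request placed on it. On the other hand the family must be unavoidable, i.e.\ the discharging must close, and because each face contributes only $|f|-4$ charge the girth-$4$ case is tighter than the girth-$5$ or girth-$6$ cases already treated in the literature, leaving little slack in the rules. Getting a family that simultaneously reduces and discharges, and in particular handling the interplay between a request on a vertex and the gadget used to make that very vertex flexible, is where essentially all the effort goes.
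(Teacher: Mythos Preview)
Your high-level plan matches the paper: reduce weighted flexibility to the existence of a probability distribution on $L$-colorings with a uniform pointwise lower bound (this is the paper's Lemma~\ref{lemma:distrib}), build that distribution by inductively peeling off small reducible induced subgraphs, and show by discharging that every triangle-free planar graph contains such a subgraph. So the framework is right. But as a proof the proposal has two genuine gaps.

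First, your ``fixer'' notion captures only half of the reducibility the paper actually uses. Being able to recolor $F$ so that the distinguished vertex $v_F$ gets any prescribed $c$ is the paper's condition (FIX). The paper also needs a companion condition (FORB): for every small independent set $I\subseteq V(F)$, the graph $F$ is still colorable from the residual lists with one color removed at each vertex of $I$. Without (FORB) the induction does not close: when you peel off $F$ and color $G-F$ first, you need the neighbors of $v_F$ in $G-F$ to avoid $c$ with constant probability, and that is exactly what (FORB), applied inside the \emph{earlier} reducible pieces, guarantees. Your alternative mechanism --- pack vertex-disjoint fixers, re-randomize each, then average over packings covering $V(G)$ --- is not the same and fails as stated: a high-degree vertex cannot be the distinguished vertex of any bounded-size fixer (all of its neighbors may already carry the target color and none lie in $F$), so the covering need not exist. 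The paper sidesteps this by requiring only that every \emph{induced subgraph} of $G$ contain a bounded reducible piece; high-degree vertices become tractable only after enough of their neighbors have been stripped away in the recursion.

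Second, the concrete configurations you name --- a $(\le 2)$-vertex, two adjacent $3$-vertices, a $3$-vertex on two short faces --- do not suffice, and this is not a detail. With only those exclusions a $3$-vertex can still sit on a single $4$-face and two $5$-faces, and no rule sends it a full unit of charge without overdrawing the $5$-faces. The paper's unavoidable set is much larger: six shapes of ``$v$-stalks'' (short paths and $4$-cycles of degree-$4$ vertices terminating in degree-$3$ vertices) assembled around a vertex with $\deg(v)-1$ good neighbors, plus two further special configurations at degree-$5$ vertices and at pairs of adjacent $4$-faces (Lemmas~\ref{lemma-mainredu}, \ref{lemma-5redu}, \ref{lemma-spec4}). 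Verifying (FIX) and (FORB) for these takes several pages of case analysis, and the discharging itself is two-phase with four rules (R0)--(R3), run inside the disk bounded by an innermost non-facial short cycle so that all $(\le 5)$-cycles are faces. You are right that ``essentially all the effort goes'' here --- but none of that effort is present in the proposal, and the configurations you do list would not make the discharging close.
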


Let us remark that the underlying choosability result for Theorem~\ref{thm-main} is a trivial
average degree argument.  While the proof of the flexibility result also exploits bounded average degree of the triangle-free planar graphs,
it somewhat unexpectedly turns out to require much more involved reducibility and discharging arguments.

\section{Flexibility and reducible configurations}

To prove weighted $\eps$-flexibility, we use the following observation made by Dvo\v{r}\'ak et al.~\cite{requests}.
\begin{lemma}\label{lemma:distrib}
Let $G$ be a graph and let $L$ be a list assignment for $G$.
Suppose $G$ is $L$-colorable and there exists a probability distribution on $L$-colorings $\varphi$ of $G$
such that for every $v\in V(G)$ and $c\in L(v)$, $\brm{Prob}[\varphi(v)=c]\ge\eps$.
Then $G$ with $L$ is weighted $\eps$-flexible.
\end{lemma}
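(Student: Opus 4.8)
The plan is a one-line first-moment argument. Fix an arbitrary weighted request $w$; we must produce an $L$-coloring $\psi$ with $\sum_{v\in V(G)} w(v,\psi(v))\ge \eps\cdot w(G,L)$. Let $\varphi$ be drawn from the hypothesized probability distribution on $L$-colorings of $G$, and consider the random variable $X=\sum_{v\in V(G)} w(v,\varphi(v))$. By linearity of expectation,
$$\mathbb{E}[X]=\sum_{v\in V(G)}\sum_{c\in L(v)} w(v,c)\cdot\brm{Prob}[\varphi(v)=c].$$

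Here I would invoke the two hypotheses in turn. Since $w(v,c)\ge 0$ for every pair $(v,c)$ (weighted requests are by definition nonnegative) and $\brm{Prob}[\varphi(v)=c]\ge\eps$ for all $v$ and all $c\in L(v)$, each term is bounded below by $\eps\, w(v,c)$, so
$$\mathbb{E}[X]\ge\eps\sum_{v\in V(G)}\sum_{c\in L(v)} w(v,c)=\eps\cdot w(G,L).$$
Because $X$ is a discrete random variable supported on (a subset of) the $L$-colorings of $G$ — and this support is nonempty precisely because $G$ is $L$-colorable and the distribution is well-defined — there must exist at least one $L$-coloring $\psi$ in the support with $X(\psi)=\sum_{v\in V(G)} w(v,\psi(v))\ge\mathbb{E}[X]\ge\eps\cdot w(G,L)$. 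Thus $w$ is $\eps$-satisfiable, and since $w$ was arbitrary, $G$ with $L$ is weighted $\eps$-flexible.

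There is essentially no obstacle here; the only points worth stating carefully are that the nonnegativity of $w$ is what lets the per-vertex lower bound $\brm{Prob}[\varphi(v)=c]\ge\eps$ be integrated against the weights, and that the $L$-colorability hypothesis guarantees the distribution ranges over a nonempty set so that ``some outcome attains at least the mean'' is meaningful. I would keep the write-up to these few lines.
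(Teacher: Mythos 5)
Your first-moment argument is correct and is exactly the standard proof of this observation; the paper itself states the lemma without proof (attributing it to Dvo\v{r}\'ak, Norin and Postle), and the intended justification is precisely this linearity-of-expectation computation followed by ``some outcome attains at least the mean.'' Nothing is missing.
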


Let $H$ be a graph. For a positive integer $d$, a set $I\subseteq V(H)$ is \emph{$d$-independent} if the distance between any distinct vertices
of $I$ in $H$ is greater than $d$.  Let $1_I$ denote the characteristic function of $I$, i.e., $1_I(v)=1$ if $v\in I$ and $1_I(v)=0$ otherwise.
For functions that assign integers to vertices of $H$, we define addition and subtraction in the natural way,
adding/subtracting their values at each vertex independently.
For a function $f:V(H)\to\mathbb{Z}$ and a vertex $v\in V(H)$, let $f\downarrow	v$ denote the function such that $(f\downarrow v)(w)=f(w)$ for $w\neq v$
and $(f\downarrow v)(v)=1$.  A list assignment $L$ is an \emph{$f$-assignment} if $|L(v)|\ge f(v)$ for all $v\in V(H)$.

Suppose $H$ is an induced subgraph of another graph $G$.  For an integer $k\ge 3$, let $\delta_{G,k}:V(H)\to\mathbb{Z}$
be defined by $\delta_{G,k}(v)=k-\deg_G(v)$ for each $v\in V(H)$.  For another integer $d\ge 0$, we say that $H$ is a \emph{$(d,k)$-reducible}
induced subgraph of $G$ if
\begin{itemize}
\item[(FIX)] for every $v\in V(H)$, $H$ is $L$-colorable for every $((\deg_H+\delta_{G,k})\downarrow v)$-assignment $L$, and
\item[(FORB)] for every $d$-independent set $I$ in $H$ of size at most $k-2$, $H$ is $L$-colorable for every $(\deg_H+\delta_{G,k}-1_I)$-assignment $L$.
\end{itemize}
Note that (FORB) in particular implies that $\deg_H(v)+\delta_{G,k}(v)\ge 2$ for all $v\in V(H)$.
Before we proceed, let us give an intuition behind these definitions.  Consider any assignment $L_0$ of lists of size $k$
to vertices of $G$. The function $\delta_{G,k}$ describes how many more (or fewer) available colors each vertex has compared to its degree.
Suppose we $L_0$-color $G-V(H)$, and let $L'$ be the list assignment for $H$ obtained from $L_0$ by removing from the list of each vertex
the colors of its neighbors in $V(G)\setminus V(H)$.  In $L'$, each vertex $v\in V(H)$ has at least $\deg_H(v)+\delta_{G,k}(v)$
available colors, since each color in $L_0(v)\setminus L'(v)$ corresponds to a neighbor of $v$ in $V(G)\setminus V(H)$.
Hence, (FIX) requires that $H$ is $L'$-colorable even if we prescribe the color of any single vertex of $H$,
and (FORB) requires that $H$ is $L'$-colorable even if we forbid to use one of the colors on the $d$-independent set $I$.

The following lemma is implicit in Dvo\v{r}\'ak et al.~\cite{requests}; we include a proof for completeness.
\begin{lemma}\label{lemma-redu}
For all integers $g,k\ge 3$ and $b\ge 1$, there exists $\eps>0$ as follows.  Let $G$ be a graph of girth at least $g$. 
If for every $Z\subseteq V(G)$,
the graph $G[Z]$ contains an induced $(g-3,k)$-reducible subgraph with at most $b$ vertices, then $G$ with any assignment of lists
of size $k$ is weighted $\eps$-flexible.
\end{lemma}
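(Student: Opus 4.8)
The strategy is to build, for a fixed graph $G$ and list assignment $L_0$ of lists of size $k$, a probability distribution on $L_0$-colorings of $G$ satisfying the hypothesis of Lemma~\ref{lemma:distrib}, and then extract $\eps$ from that. The distribution will be generated recursively: repeatedly peel off a small $(g-3,k)$-reducible induced subgraph $H$ guaranteed by the hypothesis, color the rest of the graph recursively (obtaining a random $L_0$-coloring of $G-V(H)$ whose restricted lists on $H$ give an $(\deg_H+\delta_{G,k})$-assignment $L'$ as explained in the intuition paragraph), and then complete this to a coloring of $G$ by choosing a random $L'$-coloring of $H$ in a carefully chosen way. The key point will be to ensure that at each recursion level every vertex $v$ still has every color of $L_0(v)$ used with probability bounded below by some constant depending only on $g,k,b$; taking $\eps$ to be that constant finishes the proof via Lemma~\ref{lemma:distrib}.

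The heart of the argument is the completion step. Given the random coloring of $G-V(H)$ and hence a random $(\deg_H+\delta_{G,k})$-assignment $L'$ on $H$, I would define the completion as follows. With some fixed probability $\alpha$ (a constant), do a ``FIX move'': pick a uniformly random vertex $v\in V(H)$ and a uniformly random color $c\in L'(v)$, set $\varphi(v)=c$, and use (FIX) to extend to an $L'$-coloring of the rest of $H$ (the lists on $V(H)\setminus\{v\}$ still have size $\ge\deg_H+\delta_{G,k}$, and on $v$ size $\ge 1$, so $(\deg_H+\delta_{G,k})\downarrow v$ is satisfied). With the remaining probability $1-\alpha$, do a ``FORB move'': this is needed to spread probability onto colors at vertices \emph{outside} $H$ but adjacent (at distance $\le g-3$ within $G[Z]$, where $Z$ is the current vertex set) to $H$; the $d$-independence with $d=g-3$ is exactly what makes (FORB) applicable and ensures consistency. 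More precisely, for a vertex $u$ in the ``already-colored'' part that is close to $H$, we want its color $\varphi(u)$ to vary; the FORB move lets us, conditioned on a target behavior, re-randomize by forbidding a color on a small $d$-independent set inside $H$. I would track a single invariant: after processing $H$, for every $v$ in the colored region, $\brm{Prob}[\varphi(v)=c]\ge\eps_0$ for all $c\in L_0(v)$, where $\eps_0$ is maintained to not decay because $|V(H)|\le b$ bounds how much probability mass any single FIX/FORB move can concentrate or dilute.

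Let me restate the recursion cleanly, since that is where the bookkeeping lives. Order-process: as long as $Z\neq\emptyset$, let $H$ be an induced $(g-3,k)$-reducible subgraph of $G[Z]$ with $|V(H)|\le b$; recurse on $Z\setminus V(H)$ to get a random $L_0$-coloring $\psi$ of $G[Z\setminus V(H)]$; form $L'$ on $H$ from $L_0$ by deleting, from each $L_0(v)$, the colors of the neighbors of $v$ in $Z\setminus V(H)$ — note $|L'(v)|\ge \deg_H(v)+\delta_{G,k}(v)$ since each deleted color corresponds to a $G$-neighbor of $v$ outside $H$ (here we use that $\deg_G(v)=\deg_H(v)+|N_G(v)\setminus V(H)|$ within the relevant subgraph, adjusting $\delta$ appropriately — this is the one place one must be careful that we are working with degrees in $G$, not in $G[Z]$, which is why $\delta_{G,k}$ is defined via $\deg_G$); then apply the completion step above to get a random $L'$-coloring $\varphi_H$ of $H$, and output $\psi\cup\varphi_H$. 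The base case $Z=\emptyset$ is trivial. One verifies by induction on $|Z|$ that the output is a proper $L_0$-coloring and that the invariant holds, so $G$ with $L_0$ is weighted $\eps$-flexible by Lemma~\ref{lemma:distrib}.

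The main obstacle I anticipate is the FORB move and the exact choice of how to re-randomize colors of already-colored vertices near $H$: one must simultaneously (i) guarantee that the distribution on $\varphi_H$ is a genuine probability distribution consistent with the fixed $\psi$, (ii) keep the lower bound $\eps_0$ from degrading by more than a bounded factor per level, and (iii) handle the fact that the same vertex $u$ outside $H$ may be close to many different reducible subgraphs across the recursion — so the argument must be set up so that it is always the \emph{currently-peeled} $H$ that carries the burden for colors of $u$, and $u$ itself never needs its probabilities ``refreshed'' again after it is colored. Getting a clean statement of the invariant that is strong enough to be self-sustaining under the recursion, yet weak enough to be established by the local FIX/FORB properties of a single $\le b$-vertex subgraph, is the crux; once the invariant is right, the rest is a routine (if slightly tedious) conditioning computation with constants depending only on $g$, $k$, and $b$.
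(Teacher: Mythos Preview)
Your recursive peeling framework and the appeal to Lemma~\ref{lemma:distrib} match the paper's approach, but there is a genuine gap: the invariant you propose is too weak to be self-sustaining.  You want to maintain only
\[
\brm{Prob}[\varphi(v)=c]\ge\eps_0\qquad\text{for every colored }v\text{ and every }c\in L_0(v),
\]
but this does not bootstrap.  When you peel off $H$ and want to show $\brm{Prob}[\varphi(v)=c]\ge\eps_0$ for some $v\in V(H)$, you first need $c$ to survive into $L'(v)$, i.e.\ you need \emph{all} neighbors of $v$ in $G-V(H)$ to simultaneously avoid the color $c$.  Knowing only that each such neighbor \emph{hits} $c$ with probability at least $\eps_0$ tells you nothing about the probability that they all avoid $c$; these events are not independent, and a union bound goes the wrong way.

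The paper fixes this by carrying a second, stronger invariant alongside the first: for every color $c$ and every $(g-3)$-independent set $I$ (in the currently colored graph) of size at most $k-2$,
\[
\brm{Prob}[\varphi(u)\neq c\text{ for all }u\in I]\ge p^{|I|},
\]
where $p=k^{-b}$.  This is exactly what is needed, because the neighbors of $v\in V(H)$ lying outside $H$ form a $(g-3)$-independent set in $G-V(H)$ (they share the common neighbor $v$, so girth $\ge g$ forces any two of them to be at distance $>g-3$), and there are at most $k-2$ of them since (FORB) with $I=\{v\}$ forces $\deg_H(v)+\delta_{G,k}(v)\ge 2$.  With this invariant in hand, $c\in L'(v)$ with probability at least $p^{k-2}$, and then (FIX) plus the bound $k^b$ on the total number of $L'$-colorings of $H$ gives the factor $p$ you need.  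Maintaining the second invariant is precisely where (FORB) is used: for $I$ meeting $H$, delete $c$ from the lists of $I\cap V(H)$ and invoke (FORB) to see that at least one $L'$-coloring avoids $c$ on $I\cap V(H)$.

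Two further remarks.  First, your description of the ``FORB move'' as spreading probability onto colors of vertices \emph{outside} $H$ is backwards: those vertices are already colored by $\psi$ and cannot be altered; (FORB) is used inside $H$ to maintain the avoidance invariant.  Second, the elaborate $\alpha/(1-\alpha)$ mixture of FIX and FORB moves is unnecessary.  The paper's completion step is simply: given $\psi$ and the resulting $L'$, pick an $L'$-coloring of $H$ uniformly at random.  Both invariants then follow because $H$ has at most $k^b$ $L'$-colorings and (FIX)/(FORB) each guarantee that at least one of them has the desired property.
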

\begin{proof}
Let $p=k^{-b}$ and $\eps=p^{k-1}$.
For a graph $G$ satisfying the assumptions and an assignment $L$ of lists of size $k$,
we prove the following claim by induction on the number of vertices; the part (i) implies that
$G$ with $L$ is weighted $\eps$-flexible by Lemma~\ref{lemma:distrib}:

\smallskip

There exists a probability distribution on $L$-colorings $\varphi$ of $G$ such that
\begin{itemize}
\item[(i)] for every $v\in V(G)$ and a color $c\in L(v)$, the probability that $\varphi(v)=c$ is at least $\eps$, and
\item[(ii)] for every color $c$ and every $(g-3)$-independent set $I$ in $G$ of size at most $k-2$, the probability
that $\varphi(v)\neq c$ for all $v\in I$ is at least $p^{|I|}$.
\end{itemize}
The claim clearly holds for a graph with no vertices, the basic case of the induction.
Hence, suppose that $V(G)\neq\emptyset$.  By the assumptions, there exists $Y\subseteq V(G)$ of size at most $b$
such that $G[Y]$ is $(g-3,k)$-reducible.  By the induction hypothesis, there exists a probability distribution
on $L$-colorings of $G-Y$ satisfying (i) and (ii).  Choose an $L$-coloring $\psi$ from this distribution and
let $L'$ be the list assignment for $G[Y]$ defined
by $L'(y)=L(y)\setminus \{\psi(v):v\in V(G-Y), vy\in E(G)\}$.  Note that $|L'(y)|\ge \deg_{G[Y]}(y)+\delta_{G,k}(y)$
for all $y\in Y$, and thus $G[Y]$ has an $L'$-coloring by (FORB) applied with $I=\emptyset$. Among all $L'$-colorings of $G[Y]$, choose one
uniformly at random, extending $\psi$ to an $L$-coloring $\varphi$ of $G$.

Let us first argue that (ii) holds.  Let $I_1=I\setminus Y$ and $I_2=I\cap Y$.  By the induction hypothesis,
we have $\varphi(v)\neq c$ for all $v\in I_1$ with probability at least $p^{|I_1|}$.  If $I_2=\emptyset$, this implies (ii).
Hence, suppose that $|I_2|\ge 1$.  For $y\in I_2$, let $L_c(y)=L'(y)\setminus\{c\}$,
and for $y\in Y\setminus I_2$, let $L_c(y)=L'(y)$.  Note that $|L_c(y)|\ge \deg_{G[Y]}(y)+\delta_{G,k}(y)-1_I(y)$ for all $y\in Y$,
and by (FORB), $G[Y]$ has an $L_c$-coloring.  Since $G[Y]$ has at most $k^b$ $L'$-colorings, we conclude that
the probability that $\varphi(y)\neq c$ for all $y\in I_2$ is at least $1/k^b=p\ge p^{|I_2|}$.
Hence, the probability that $\varphi(y)\neq c$ for all $y\in I$ is at least $p^{|I_1|+|I_2|}\ge p^{|I|}$, implying (ii).

Next, let us argue that (i) holds.  For $v\in V(G)\setminus Y$, this is true by the induction hypothesis.
Hence, suppose that $v\in Y$, and let $I$ be the set of neighbors of $v$ in $V(G)\setminus Y$.
Since $G$ has girth at least $g$ and all vertices in $I$ have a common neighbor, the set $I$ is $(g-3)$-independent in $G-Y$.
Furthermore, (FORB) implies $1\le \deg_{G[Y]}(v)+\delta_{G,k}(v)-1_{\{v\}}(v)=\deg_{G[Y]}(v)+k-\deg_G(v)-1=k-1-|I|$,
and thus $|I|\le k-2$.  Hence, by the induction hypothesis we have $\psi(u)\neq c$ for all $u\in I$ with probability at
least $p^{k-2}$.  Assuming this is the case, (FIX) implies there exists an $L'$-coloring of $G[Y]$ which gives $v$ the color $c$.
Since $G[Y]$ has at most $k^b$ $L'$-colorings, we conclude that the probability that $\varphi(v)=c$ is at least $p^{k-2}/k^b=\eps$.
Hence, (i) holds.
\end{proof}

The argument used to prove Lemma~\ref{lemma-redu} also implies the following fact.
\begin{lemma}\label{lemma-many}
Let $k\ge 3$ and $b\ge 1$ be integers and let $G$ be a graph.
If for every $Z\subseteq V(G)$,
the graph $G[Z]$ contains an induced $(b,k)$-reducible subgraph with at most $b$ vertices, then $G$ has at least $2^{|V(G)|/b}$ colorings
from any assignment of lists of size $k$.
\end{lemma}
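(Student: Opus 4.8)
The plan is to reuse the inductive scaffolding from the proof of Lemma~\ref{lemma-redu}, but instead of tracking a probability distribution we track a lower bound on the number of distinct $L$-colorings. The key observation driving the factor of $2$ is that a $(b,k)$-reducible subgraph $H=G[Y]$ satisfies (FORB) with $I=\emptyset$, which just says $H$ is $L'$-colorable for any $(\deg_H+\delta_{G,k})$-assignment $L'$; but more is true. Since (FORB) also implies $\deg_H(v)+\delta_{G,k}(v)\ge 2$ for every $v\in V(H)$, and since $H$ is nonempty (a reducible subgraph has at least one vertex), we can find a vertex $v_0\in V(H)$ and produce \emph{two} different colors available to extend at $v_0$ while still completing to an $L'$-coloring of the rest of $H$ — this is exactly what (FIX) gives us at $v_0$ for two choices of prescribed color, since $|L'(v_0)|\ge 2$.

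First I would set up the induction on $|V(G)|$, with the trivial base case $V(G)=\emptyset$ giving $2^0=1$ coloring. For the inductive step, take $Y\subseteq V(G)$ with $|Y|\le b$ and $G[Y]$ an induced $(b,k)$-reducible subgraph. By the induction hypothesis applied to $G-Y$ (which still satisfies the hypothesis of the lemma, since its induced subgraphs are induced subgraphs of $G$), there are at least $2^{|V(G-Y)|/b}=2^{(|V(G)|-|Y|)/b}$ distinct $L$-colorings $\psi$ of $G-Y$. The next step is to show that each such $\psi$ extends to at least $2^{|Y|/b}$ distinct $L$-colorings of $G$. Given $\psi$, form $L'(y)=L(y)\setminus\{\psi(u):u\in V(G-Y),\ uy\in E(G)\}$ as in Lemma~\ref{lemma-redu}, so that $|L'(y)|\ge\deg_{G[Y]}(y)+\delta_{G,k}(y)\ge 2$ for all $y\in Y$. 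Fix any $v_0\in Y$; by (FIX) applied at $v_0$, for each of the (at least two) colors $c\in L'(v_0)$ there is an $L'$-coloring of $G[Y]$ assigning $v_0$ the color $c$. These give at least $2$ distinct $L'$-colorings of $G[Y]$, hence at least $2\ge 2^{|Y|/b}$ distinct extensions of $\psi$ (using $|Y|\le b$). Multiplying, $G$ has at least $2^{(|V(G)|-|Y|)/b}\cdot 2^{|Y|/b}=2^{|V(G)|/b}$ distinct $L$-colorings, since distinct colorings of $G-Y$ yield distinct colorings of $G$, and for fixed $\psi$ the extensions are distinct.

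The only subtlety — and the one place to be careful — is the counting bookkeeping: one must check that colorings of $G$ obtained from different $\psi$'s are genuinely different (immediate, since they differ already on $V(G-Y)$) and that the "at least $2$" extensions per $\psi$ really do beat $2^{|Y|/b}$; this needs $|Y|/b\le 1$, which holds by hypothesis, and the fact that $|L'(v_0)|\ge 2$, which is exactly the content of the remark following (FORB). I expect no real obstacle here: the argument is strictly easier than Lemma~\ref{lemma-redu}, since we only need the crudest consequence (two colors at one vertex) rather than the full probabilistic estimates (i)–(ii). Indeed the parameter $d=b$ in "$(b,k)$-reducible" plays no role in this proof at all; it only matters that $H$ is $(d,k)$-reducible for \emph{some} $d$, which guarantees (FIX) and the degree bound we use.
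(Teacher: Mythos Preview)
Your proof is correct and is precisely the natural fleshing-out of what the paper indicates: it gives no separate proof of Lemma~\ref{lemma-many} but simply remarks that ``the argument used to prove Lemma~\ref{lemma-redu} also implies'' it, i.e., the same induction removing a reducible piece $Y$ of size at most $b$, with the observation that each coloring of $G-Y$ extends in at least two ways. Your use of (FIX) at a vertex $v_0$ together with $|L'(v_0)|\ge 2$ (from the remark after (FORB)) is one clean way to get the two extensions; an equally short alternative is to take any $L'$-coloring $\varphi_1$ (from (FORB) with $I=\emptyset$) and then apply (FORB) with $I=\{v_0\}$ to forbid $\varphi_1(v_0)$ and obtain a second coloring --- but this is a cosmetic difference, and your observation that the parameter $d$ is irrelevant here is correct either way.
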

Note that in subgraphs with at most $b$ vertices, $(b,k)$-reducibility means that in the (FORB) property, we only care about sets $I$ of size $1$.
In particular, the arguments from Sections~\ref{sec-four} 
imply that planar triangle-free graphs have exponentially many colorings from lists of size four.
Of course, there exist simpler proofs of this fact, see e.g.~\cite{exp4trfree} for the triangle-free case
(even in a more general setting of graphs on surfaces).

\section{Triangle-free planar graphs}\label{sec-four}

In this section, we prove Theorem~\ref{thm-main}.  The proof is by the discharging method: We first describe a number of configurations
ensuring the existence of a small $(1,4)$-reducible subgraph, then perform a double-counting argument to show that one of these configurations
appears in any triangle-free planar graph, so that Lemma~\ref{lemma-redu} applies.

\subsection{List coloring preliminaries}

We use the following well-known fact.
\begin{lemma}[Thomassen~\cite{Thomassen97}]\label{lemma-gallai}
Let $G$ be a connected graph and $L$ a list assignment such that $|L(u)|\ge \deg(u)$ for all
$u\in V(G)$.  If either there exists a vertex $u\in V(G)$ such that $|L(u)|>\deg(u)$, or
some $2$-connected component of $G$ is neither complete nor an odd cycle, then $G$ is $L$-colorable.
\end{lemma}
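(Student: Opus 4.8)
I would prove this by induction on $|V(G)|$, following the classical Erd\H{o}s--Rubin--Taylor approach to degree-choosability. First I would dispose of the case that some vertex $u$ has $|L(u)|>\deg(u)$: root a spanning tree of the connected graph $G$ at $u$, order $V(G)$ as $v_1,\dots,v_n=u$ by non-increasing tree-distance from $u$, so that every $v_i$ with $i<n$ has its tree-parent later in the order, and color greedily. Each $v_i$ with $i<n$ then has at most $\deg(v_i)-1<|L(v_i)|$ already-colored neighbors, and $u=v_n$ has $\deg(u)<|L(u)|$ neighbors, so a color is always available. Hence I may assume every list is tight, $|L(v)|=\deg(v)$ for all $v$, and the remaining task is: show that if some block $B_0$ of $G$ is neither a complete graph nor an odd cycle, then $G$ is $L$-colorable.

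If $G$ has a cut vertex, I would reduce to smaller graphs. Choose a leaf block $B$ of the block-cut tree with $B\ne B_0$ (re-rooting the tree at $B_0$ if necessary), let $v$ be its unique cut vertex, and set $G'=G-(V(B)\setminus\{v\})$. Both $B$ and $G'$ are connected with fewer vertices than $G$, their lists still dominate their degrees, and $v$ acquires \emph{slack} in each: $|L(v)|=\deg_B(v)+\deg_{G'}(v)$ with $\deg_B(v),\deg_{G'}(v)\ge 1$. Since $G'$ still contains $B_0$, the induction hypothesis applies to $G'$ even after $L(v)$ is replaced by an arbitrary $\deg_{G'}(v)$-element subset, so the set of colors that some $L$-coloring of $G'$ gives to $v$ has size at least $|L(v)|-\deg_{G'}(v)+1=\deg_B(v)+1$. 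A short separate count --- Hall's theorem when $B$ is a clique or an edge, color propagation along a path when $B$ is an odd cycle, and the induction hypothesis again when $B$ is neither --- shows that the colors some $L$-coloring of $B$ gives to $v$ number at least $|L(v)|-\deg_B(v)=\deg_{G'}(v)$. The two sets then overlap; I fix a color $c$ in their intersection, color $G'$ and $B$ with $v\mapsto c$, and glue. This leaves only the case that $G$ itself is $2$-connected and is neither a complete graph nor an odd cycle.

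This last case is the crux. If $G$ is an even cycle, it is $2$-choosable by the standard argument (take a proper $2$-coloring if all lists coincide; otherwise begin at an edge whose endpoints have distinct lists and color around the cycle). If $G$ is not a cycle, I would use the structural fact that such a $G$ contains two vertices $u,w$ at distance exactly two, with a common neighbor $b$, for which $G-\{u,w\}$ is connected; order $V(G)\setminus\{u,w\}$ as $z_1,\dots,z_m=b$ so that each $z_i$ with $i<m$ has a neighbor $z_j$ with $j>i$ (a reverse breadth-first-search order from $b$ in $G-\{u,w\}$). Coloring $u$ and $w$ first and then $z_1,\dots,z_m$ greedily succeeds automatically at every $z_i$ with $i<m$, and it succeeds at $z_m=b$ as well as soon as $u$ and $w$ receive the same color, since then $b$ sees at most $\deg(b)-1$ distinct colors on its $\deg(b)$ neighbors. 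So if $L(u)\cap L(w)\ne\emptyset$ I color $u$ and $w$ alike and finish. The subcase $L(u)\cap L(w)=\emptyset$ is where the genuine work lies: here one must exploit the two still-free colors at $u$ and at $w$, or delete $u$ and invoke induction on $G-u$ except when $G-u$ is a Gallai tree (a tightly constrained situation to be handled directly), so as to force either a repeated color or an out-of-list color at $b$. I expect this subcase, together with the structural lemma on $2$-connected non-cycles, to be the main obstacle; by contrast the big-list reduction is immediate and the block reduction is routine once the slack at the cut vertex is observed.
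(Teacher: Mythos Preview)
The paper does not prove this lemma; it quotes it as a known result with a citation to Thomassen, so there is no ``paper's proof'' to compare against.  Your plan is the classical Erd\H{o}s--Rubin--Taylor/Borodin argument and is sound in outline.

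Two remarks on the details.  First, in the block-reduction step you can get the bound on achievable colors at the cut vertex $v$ inside the leaf block $B$ by the \emph{same} trick you use for $G'$, without case-splitting on whether $B$ is a clique, an odd cycle, or neither: if more than $\deg_B(v)$ colors in $L(v)$ were bad for $B$, replace $L(v)$ by any $(\deg_B(v)+1)$-element set of bad colors; the slack case of the lemma (already proved) then colors $B$, contradicting badness.  This gives $|A_B|\ge |L(v)|-\deg_B(v)=\deg_{G'}(v)$ directly, so Hall's theorem and ``color propagation'' are unnecessary here.

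Second, your diagnosis of the crux is accurate, but the disjoint-list subcase is less fearsome than your sketch suggests.  Having chosen $u$, $w$, $b$, try each $c\in L(u)$ in turn: if some neighbor $x$ of $u$ has $c\notin L(x)$, then deleting $u$ and removing $c$ from its neighbors' lists leaves slack at $x$, and the easy case finishes.  So the only obstruction is $L(u)\subseteq L(x)$ for every neighbor $x$ of $u$ (and symmetrically for $w$), together with $G-u$ being a Gallai tree.  These two conditions are extremely restrictive on a $2$-connected $G$ that is neither complete nor a cycle; the combinatorics needed to finish from here is finite and well documented (e.g.\ in the original Erd\H{o}s--Rubin--Taylor paper or in standard textbook treatments), so you should not expect a genuinely new idea to be required.
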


This has the following consequence.
\begin{corollary}\label{cor-rem}
Let $G$ be a connected graph and let $v$ be a vertex of $G$.
Let $L$ be an assignment of non-empty lists to vertices of $G$ such that $G-v$
is $L$-colorable.  Let $C_1$, \ldots, $C_k$ be the vertex sets of the components
of $G-v$, and for $1\le i\le k$, let $n_i$ be the number of neighbors of $v$ in $C_i$.
Let $c_i=n_i-1$ if $|L(x)|\ge\deg_G(x)$ for all $x\in C_i$ and $G[C_i\cup\{v\}]$ has a $2$-connected component
that is neither complete nor an odd cycle, and $c_i=n_i$ otherwise.  If $|L(v)|>c_1+\ldots+c_k$, then $G$ is $L$-colorable.
\end{corollary}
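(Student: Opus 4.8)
The plan is to pick a color for $v$ that extends, component by component, to an $L$-coloring of all of $G$. For a color $c\in L(v)$ and an index $i$, call $c$ \emph{$i$-bad} if $G[C_i]$ has no $L$-coloring in which no neighbor of $v$ in $C_i$ receives $c$; equivalently, the precoloring $v\mapsto c$ does not extend to $G[C_i\cup\{v\}]$. If some $c\in L(v)$ is $i$-bad for no $i$, then assigning $c$ to $v$ and, for each $i$, extending by an $L$-coloring of $G[C_i]$ that avoids $c$ on the neighbors of $v$, produces an $L$-coloring of $G$. So it suffices to prove that the number $\beta_i$ of $i$-bad colors satisfies $\beta_i\le c_i$ for every $i$: then $\sum_i\beta_i\le\sum_i c_i<|L(v)|$, and the existence of a color that is not $i$-bad for any $i$ follows by counting (the degenerate case $k=0$ being handled directly, since $L(v)$ is nonempty).

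Before bounding $\beta_i$ I would record the bookkeeping. Since $C_i$ is a component of $G-v$, any $x\in C_i$ has all its neighbors in $C_i\cup\{v\}$, so $\deg_{G[C_i\cup\{v\}]}(x)=\deg_G(x)$, while $\deg_{G[C_i\cup\{v\}]}(v)=n_i\ge 1$; moreover $G[C_i\cup\{v\}]$ is connected, because $G[C_i]$ is connected and $v$ has a neighbor in $C_i$. The generic bound $\beta_i\le n_i$ (which gives $\beta_i\le c_i$ when $c_i=n_i$) is easy: fix an $L$-coloring $\varphi_0$ of $G[C_i]$, which exists because $G-v$ is $L$-colorable, and let $S$ be the set of colors $\varphi_0$ uses on the neighbors of $v$ in $C_i$. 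Every color $c\notin S$ is extended by $\varphi_0$, so every $i$-bad color lies in $S$ and $\beta_i\le|S|\le n_i$.

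For the improved bound $\beta_i\le n_i-1$ — the case where $|L(x)|\ge\deg_G(x)$ for all $x\in C_i$ and $G[C_i\cup\{v\}]$ has a $2$-connected component that is neither complete nor an odd cycle — suppose for contradiction there are at least $n_i$ distinct $i$-bad colors, and pick $n_i$ of them to form a list $\hat L(v)$ at $v$, setting $\hat L(x)=L(x)$ for $x\in C_i$. By the bookkeeping above, $|\hat L(u)|\ge\deg_{G[C_i\cup\{v\}]}(u)$ for every vertex $u$, and the $2$-connectivity hypothesis is exactly the side condition of Lemma~\ref{lemma-gallai}, so $G[C_i\cup\{v\}]$ has an $\hat L$-coloring $\varphi$. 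Then $\varphi$ restricted to $C_i$ is an $L$-coloring of $G[C_i]$ in which no neighbor of $v$ receives $\varphi(v)$, so $\varphi(v)$ is not $i$-bad — contradicting that $\varphi(v)\in\hat L(v)$, which consists of $i$-bad colors. Hence $\beta_i\le n_i-1=c_i$, and combining the two cases finishes the proof.

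The only real content is this last step; the main thing to get right is the realization that the correct instrument is Lemma~\ref{lemma-gallai} applied to $G[C_i\cup\{v\}]$ with a list of exactly $n_i$ candidate bad colors at $v$, so that the degree condition holds with equality at $v$ and the prescribed structural hypothesis supplies precisely the slack Lemma~\ref{lemma-gallai} needs. Everything else is routine: the degree identities for vertices of $C_i$, the reduction of "extends to $G[C_i\cup\{v\}]$" to "avoids $c$ near $v$ in an $L$-coloring of $G[C_i]$", and the final counting argument over the colors of $L(v)$.
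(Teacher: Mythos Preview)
Your proof is correct and follows essentially the same approach as the paper: define for each component the set of ``bad'' colors at $v$ (the paper calls this $F_i$), bound its size by $c_i$ using Lemma~\ref{lemma-gallai} with the bad colors serving as the list at $v$, and finish by a union bound over $L(v)$. The only cosmetic difference is that you pick exactly $n_i$ bad colors for the list $\hat L(v)$ while the paper uses all of $F_i$; both yield the same contradiction via Lemma~\ref{lemma-gallai}.
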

\begin{proof}
For $i=1,\ldots,k$, let $G_i=G[C_i\cup\{v\}]$ and let $F_i$ consist of the colors $c\in L(v)$ such that $G_i$ does not have an $L$-coloring that assigns the color $c$ to $v$.
We claim that $|F_i|\le c_i$; if we prove this to be the case, the claim follows, since then there exists a color $c'\in L(v)\setminus (F_1\cup\ldots\cup F_k)$,
by the definition of the sets $F_i$ there exists an $L$-coloring of $G_i$ assigning to $v$ the color $c'$ for $i=1, \ldots, k$, and
the combination of these colorings gives an $L$-coloring of $G$.

Let $L_i$ be the list assignment for $G_i$ such that $L_i(x)=L(x)$ for $u\in C_i$
and $L_i(v)=F_i$.  By the definition of $F_i$, the graph $G_i$ is not $L_i$-colorable.
If $|L(x)|\ge\deg(x)$ for all $x\in C_i$ and $G_i$ has a $2$-connected component
that is neither complete nor an odd cycle, then Lemma~\ref{lemma-gallai} implies $|F_i|\le \deg_{G_i}(v)-1=n_i-1=c_i$.
Otherwise, by the assumptions $G_i-v$ has an $L_i$-coloring, and since this coloring cannot be extended to an $L_i$-coloring
of $G_i$, we conclude that $|F_i|\le \deg_{G_i}(v)=n_i=c_i$.
\end{proof}

\subsection{Reducible configurations}

When coloring from lists of size four, vertices of degree at most three can be colored greedily.
This argument no longer works in the flexibility setting, as this greedy coloring does not have any
freedom to satisfy requests.  However, the following weaker claim holds.

\begin{lemma}\label{lemma-redusmall}
If $G$ is a triangle-free graph, then a vertex of degree at most two, or two adjacent vertices of degree three,
form a $(1,4)$-reducible subgraph.
\end{lemma}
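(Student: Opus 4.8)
The plan is to verify the two reducibility conditions (FIX) and (FORB) directly for the two candidate subgraphs $H$, using the list-coloring tools from Lemma~\ref{lemma-gallai} and Corollary~\ref{cor-rem}. In both cases $k=4$ and $d=g-3=1$ (since triangle-free means girth at least $4$), so $\delta_{G,4}(v)=4-\deg_G(v)$, and the relevant ``demand'' function on $H$ is $f:=\deg_H+\delta_{G,4}$, with $f(v)=\deg_H(v)+4-\deg_G(v)$. Note $f(v)$ equals $4$ minus the number of neighbors of $v$ outside $H$.

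First I would handle the single vertex $v$ with $\deg_G(v)\le 2$, so $H=\{v\}$ is an isolated vertex. Here $f(v)=4-\deg_G(v)\ge 2$. For (FIX) we precolor the unique vertex, which is trivially possible since its list is nonempty (a $((f)\downarrow v)$-assignment gives $v$ a list of size $\ge 1$). For (FORB), a $1$-independent set $I\subseteq\{v\}$ has size at most $1$ (well within the bound $k-2=2$), and we need $H=\{v\}$ to be $L$-colorable for an $(f-1_I)$-assignment; since $f(v)-1_I(v)\ge 2-1=1$, the list is nonempty and we are done. This case is essentially immediate.

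The substantive case is $H=\{u,w\}$ where $uw\in E(G)$ and $\deg_G(u)=\deg_G(w)=3$. Then $\deg_H(u)=\deg_H(w)=1$, so $f(u)=f(w)=1+4-3=2$. For (FIX), say we fix the color of $u$: then the list at $u$ has size $\ge 1$ and at $w$ size $\ge f(w)=2>1=\deg_H(w)$; since $H$ is just the edge $uw$, greedily color $u$ then $w$ (the size-$2$ list at $w$ loses at most one color). Symmetrically if we fix $w$. For (FORB) with $I=\emptyset$, both lists have size $\ge 2\ge\deg_H+1$, so $H=K_2$ is trivially $L$-colorable (e.g.\ by Lemma~\ref{lemma-gallai}, or just greedily). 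For (FORB) with $|I|=1$, say $I=\{u\}$: then $u$ gets a list of size $\ge f(u)-1=1$ and $w$ a list of size $\ge f(w)=2$; color $u$ first (nonempty list), then $w$ (size-$2$ list, one forbidden color). If $|I|=2$, i.e.\ $I=\{u,w\}$: but $u$ and $w$ are adjacent, hence at distance $1\le d=1$, so $\{u,w\}$ is not $1$-independent, and this case does not arise. So every required $(f-1_I)$-assignment is colorable, establishing (FORB).

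The only mild subtlety — and the place one must be careful — is bookkeeping the degree arithmetic: checking that in each instance the demanded list size ($f(v)$, or $f(v)-1$, or $1$ after a fixing) is always at least one more than the number of already-colored $H$-neighbors, so that a two-vertex greedy argument suffices. Because $H$ here has at most two vertices and is a clique (a vertex or an edge), no appeal to the odd-cycle/complete-graph exception in Lemma~\ref{lemma-gallai} is actually needed; the triangle-free hypothesis enters only to guarantee $H$ is an induced subgraph (no chords) and that adjacent vertices are at distance exactly $1$, ruling out the $|I|=2$ case for the edge configuration. I would also remark explicitly that (FORB) forces $f(v)\ge 2$ everywhere, which is exactly why the degree bounds ($\le 2$, resp.\ $=3$ with one $H$-neighbor) are the right thresholds.
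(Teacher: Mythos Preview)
Your proof is correct and follows essentially the same approach as the paper: both arguments directly compute the list-size function $f=\deg_H+\delta_{G,4}$ on the one- or two-vertex subgraph $H$ and verify (FIX) and (FORB) by elementary counting, noting in the edge case that $\{u,w\}$ is not $1$-independent so $|I|\le 1$. The only cosmetic difference is that the paper observes $(\deg_H+\delta_{G,4}-1_{\{v_i\}})$ coincides with $(\deg_H+\delta_{G,4})\downarrow v_i$, bundling the (FIX) and (FORB) verifications together, whereas you treat them separately; neither argument actually invokes Lemma~\ref{lemma-gallai} or Corollary~\ref{cor-rem}, and the triangle-free hypothesis is in fact not used in either proof (your remark about it ensuring $H$ is induced is vacuous for a two-vertex subgraph).
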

\begin{proof}
Suppose $v$ is a vertex of $G$ of degree at most two; then $\delta_{G,4}(v)\ge 2$.
Let $H$ be the subgraph of $G$ formed by $v$.  Then $(\deg_H+\delta_{G,4})(v)\ge 2$, and
$((\deg_H+\delta_{G,4})\downarrow v)(v)=1$ and $(\deg_H+\delta_{G,4}-1_{\{v\}})(v)\ge 1$.
Hence, $H$ is $L$-colorable whenever $L$ is a $(\deg_H+\delta_{G,4})\downarrow v$-assignment
or $(\deg_H+\delta_{G,4}-1_{\{v\}})$-assignment.  Consequently, $H$ is $(1,4)$-reducible.

Suppose now $v_1$ and $v_2$ are adjacent vertices of $G$ of degree three;
then $\delta_{G,4}(v_i)=1$ for $i\in\{1,2\}$.  Let $H$ be the subgraph of $G$ induced by $\{v_1,v_2\}$.
The function $f_i=(\deg_H+\delta_{G,4})\downarrow v_i$ satisfies $f_i(v_i)=1$ and $f_i(v_{3-i})=2$.
Note that the only non-empty $1$-independent sets in $H$ are $\{v_1\}$ and $\{v_2\}$,
and $(\deg_H+\delta_{G,4}-1_{\{v_i\}})=f_i$ for $i\in\{1,2\}$.
Clearly, $H$ is $L$-colorable whenever $L$ is an $f_1$-assignment or $f_2$-assignment,
and thus $H$ is $(1,4)$-reducible.
\end{proof}

\begin{figure}[h!]
\includegraphics[scale=0.7]{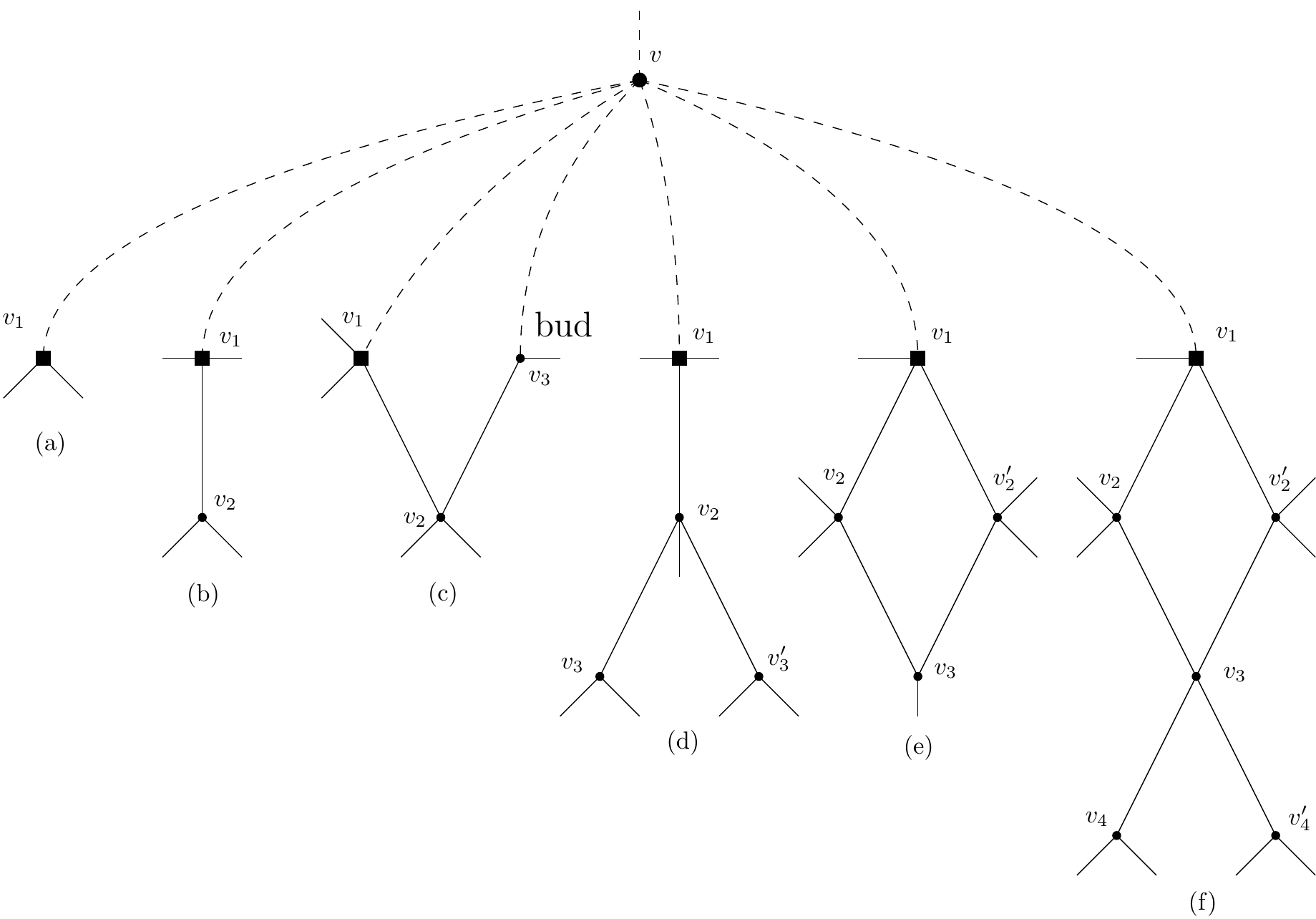}
\caption{Stalks.}\label{fig-stalks}
\end{figure}

We now describe a quite general class of $(1,4)$-reducible configurations.
Let $G$ be a triangle-free graph and $v$ a vertex of $G$. A \emph{$v$-stalk} is one of the following subgraphs (see Figure~\ref{fig-stalks}):
\begin{enumerate}
\item[(a)] A path $vv_1$ such that $\deg(v_1)=3$; or
\item[(b)] a path $vv_1v_2$ such that $\deg(v_1)=4$ and $\deg(v_2)=3$; or
\item[(c)] a cycle $vv_1v_2v_3$ such that $\deg(v_1)=\deg(v_2)=4$ and $\deg(v_3)=3$; or
\item[(d)] a path $vv_1v_2v_3$ and an edge $v_2v_3'$ with $v_3'\neq v_3$ such that $\deg(v_1)=\deg(v_2)=4$ and $\deg(v_3)=\deg(v'_3)=3$; or
\item[(e)] a path $vv_1v_2v_3$ and a path $v_1v'_2v_3$ with $v'_2\neq v_2$ such that $\deg(v_1)=\deg(v_2)=\deg(v'_2)=4$ and $\deg(v_3)=3$; or
\item[(f)] a path $vv_1v_2v_3v_4$, a path $v_1v'_2v_3$ with $v'_2\neq v_2$, and an edge $v_3v'_4$ with $v'_4\neq v_4$ such that $\deg(v_1)=\deg(v_2)=\deg(v'_2)=\deg(v_3)=4$ and $\deg(v_4)=\deg(v'_4)=3$.
\end{enumerate}
In all the cases, the \emph{root} of the stalk is the vertex $v_1$.  In the
case (c), we say that the vertex $v_3$ is the \emph{bud} of the stalk; in the
other cases, the stalk has no buds.  For a subgraph $C$ of $G$, a vertex $x$ is \emph{$(v,C)$-good} if $x$ is the root of a $v$-stalk
which is vertex-disjoint from $C$; in case the $v$-stalk has a bud $w$, we say that $x$ is \emph{$(v,C)$-good using the bud $w$}.

\begin{lemma}\label{lemma-mainredu}
Let $G$ be a plane triangle-free graph with the outer face bounded by a cycle $C$
such that each $(\le\!5)$-cycle in $G$ bounds a face.
Let $v$ be a vertex of $G$ of degree $d\ge 3$, not contained in $C$.  If $v$ has $d-1$ $(v,C)$-good neighbors, no two of them using the same bud,
then $G$ contains a $(1,4)$-reducible induced subgraph vertex-disjoint from $C$ with at most $6d-5$ vertices.
\end{lemma}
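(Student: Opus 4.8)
The plan is to let $H$ be the subgraph of $G$ induced by $v$ together with $d-1$ $v$-stalks, one rooted at each of the $d-1$ given $(v,C)$-good neighbors of $v$, chosen so that these stalks are pairwise vertex-disjoint apart from $v$. The hypothesis that no two of them use the same bud, together with triangle-freeness and the assumption that every $(\le\!5)$-cycle bounds a face, is what makes such a choice possible and also forces $G[V(H)]$ to contain no edges beyond those displayed in the stalk definitions. Since a $v$-stalk has at most $6$ vertices other than $v$, we get $|V(H)|\le 1+6(d-1)=6d-5$, and $H$ is vertex-disjoint from $C$ because $v\notin C$ and each stalk is disjoint from $C$ by the definition of $(v,C)$-good. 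It remains to verify that $H$ is $(1,4)$-reducible.

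Put $g(x)=\deg_H(x)+\delta_{G,4}(x)=\deg_H(x)+4-\deg_G(x)$ for $x\in V(H)$; both (FIX) and (FORB) amount to $L$-colorability of $H$ once every list meets the target $g$, except that $g$ may be decreased by one either at a single prescribed vertex (its target then being $1$) or on an independent set of size at most two. I would establish both by an elimination-ordering (degeneracy) argument, for which I record two facts. First, every $x\in V(H)\setminus\{v\}$ has $\deg_G(x)\in\{3,4\}$, so $g(x)\ge\deg_H(x)\ge 2$, with strict inequality exactly when $\deg_G(x)=3$; thus each degree-$3$ vertex of a stalk carries one unit of \emph{surplus} over its $H$-degree. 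Second, $v$ is adjacent to all $d-1$ roots, so $\deg_H(v)\ge d-1$ and $g(v)\ge 3$; and a glance at the six stalk types shows that in $H-v$ every degree-$4$ vertex of $G$ lies on a path, internally using only degree-$4$ vertices, that ends at a degree-$3$ vertex of $G$ or at a neighbor of $v$. Informally: every chain of \emph{tight} vertices runs into a \emph{loose} (degree-$3$) end or into $v$.

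For (FORB), fix an independent set $I\subseteq V(H)$ with $|I|\le 2$; I claim $H$ is $(g-1_I)$-degenerate, i.e.\ every nonempty $S\subseteq V(H)$ contains a vertex $x$ with $\deg_{H[S]}(x)<(g-1_I)(x)$. Granting this, an elimination order exists and coloring $H$ along its reverse proves (FORB). To find such an $x$ in a given $S$: a degree-$3$ vertex of a stalk that is not in $I$ works whenever it lies in $S$, since its $H[S]$-degree is at most its $H$-degree, which equals $g(x)-1<(g-1_I)(x)$. If $S$ contains no such vertex, chase a chain of tight vertices through $S$; by the second fact it terminates at $v$ or at a degree-$3$ vertex of $G$ lying in $S$, and such a vertex is either not in $I$ (done) or one of the at most two vertices of $I$, in which case its non-peelability forces a further neighbor into $S$ and the chase continues. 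Finally $v$ is always peelable, because $g(v)\ge 3$ while the number of neighbors of $v$ in $S$ that could obstruct peeling it is at most $|I|\le 2$ --- each such neighbor is either a vertex of $I$ or a root whose stalk must then contribute a distinct degree-$3$ end to $I$.

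For (FIX) with a prescribed vertex $w$: color $w$ from its nonempty list and extend by the same argument, which now needs, for each $S\ni w$ with $|S|\ge 2$, a $g$-peelable vertex \emph{other than} $w$; this follows as before once one notes that $w$ can obstruct at most one chain, so in the stalk containing $w$ some tight chain still reaches a loose end or root distinct from $w$, while in every other stalk and at $v$ no chain can be obstructed at all. I expect the only substantial work --- and the one place this could go wrong --- to be turning these chain-chasing observations into a clean finite case analysis over the six stalk types (each individual verification being an instance of Lemma~\ref{lemma-gallai} applied to a small subgraph), together with checking that the $d-1$ stalks may indeed be taken pairwise disjoint apart from $v$ and that $G[V(H)]$ carries no superfluous edges. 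With that bookkeeping in place, (FIX), (FORB) and the bound $|V(H)|\le 6d-5$ all follow as sketched.
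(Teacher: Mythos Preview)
Your proposal rests on two structural claims that you flag as ``the only substantial work'' but then treat as routine: that the $d-1$ stalks can be chosen pairwise disjoint apart from $v$, and that $G[V(H)]$ carries no edges beyond those in the stalk definitions. Neither follows from the hypotheses, and the paper's proof is built precisely around the fact that they can fail. For disjointness: in a type-(d) stalk $vx v_2 v_3$ with pendant $v_2v_3'$, the vertex $v_3'$ (of degree three) may well be adjacent to $v$, in which case $v_3'$ is itself one of the $(v,C)$-good neighbors via a type-(a) stalk, and there may be no alternative stalk for $x$ avoiding $v_3'$. For extra edges: two type-(b) stalks $vx_1w_1$ and $vx_2w_2$ can have $w_1w_2\in E(G)$, since the resulting $5$-cycle $vx_1w_1w_2x_2$ is allowed provided it bounds a face. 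Once such an edge appears, your ``surplus'' at the degree-$3$ ends evaporates (their $H$-degree rises to match $g$), and the elimination-ordering argument breaks down.

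The paper's proof takes $H$ to be the induced subgraph on the union of the given stalks without any disjointness assumption, and immediately remarks that the argument ``would be easy if the $v$-stalks were vertex-disjoint and there were no edges between them; however, we need to argue about such overlaps.'' It then isolates two structural facts---($\dagger$) no non-cycle subgraph of $H$ has all faces bounded by $(\le 5)$-cycles, and ($\star$) a classification of how two stalks can share a vertex---and uses these to carry out a fairly delicate case analysis for both (FIX) and (FORB), the latter via a reduction procedure (essentially your peeling idea, augmented by Lemma~\ref{lemma-gallai} to remove $2$-connected even pieces) followed by an enumeration of the possible shapes of what remains. Your peeling framework is the right instinct, but the heart of the lemma is exactly the overlap analysis you hoped to sidestep.
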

\begin{proof}
Let $X$ be a set of $d-1$ $(v,C)$-good neighbors of $v$, no two of them using the same bud, and for $x\in X$, let
$S_x$ be a $v$-stalk witnessing this is the case.
Let $H$ be the subgraph of $G$ induced by $\bigcup_{x\in X} V(S_x)$.
We clearly have $|V(H)|\le 6d-5$, and thus it suffices to show that $H$ is $(1,4)$-reducible.
This would be easy if the $v$-stalks were vertex-disjoint and there were no edges between them;
however, we need to argue about such overlaps.

Since each $(\le\!5)$-cycle in $G$ bounds a face and $C$ is vertex-disjoint from $H$,
the following claim (which excludes many of the overlaps) holds.
\begin{itemize}
\item[(\dag)] If $Q$ is a subgraph of $H$ and $Q$ is not a cycle, then $Q$ has a face
that is not bounded by a $(\le\!5)$-cycle.
\end{itemize}

For $x\in X$ and a vertex $w$ of $S_x$ denoted by $v_i$ or $v'_i$ in the definition of a $v$-stalk, let us define $\ell_x(w)=i$.
By (\dag) and the assumption that $G$ is triangle-free, we conclude that
$\ell_x(w)$ is equal to the distance between $v$ and $w$ in $G$, with the following exceptions:
\begin{itemize}
\item The vertex $v_3$ in case (c) is at distance $1$ from $v$,
\item the vertices $v_3$ and $v'_3$ in case (d) may be at distance $1$, $2$, or $3$ from $v$, and
\item the vertices $v_4$ and $v'_4$ in case (f) may be at distance $2$, $3$, or $4$ from $v$.
\end{itemize}
Suppose that $x,y\in X$ are distinct and $w\in V(S_x)\cap V(S_y)\setminus\{v\}$.
If both $\ell_x(w)$ and $\ell_y(w)$ are equal to the distance $r$ between $v$ and $w$,
then $r\ge 2$, as otherwise we would have $x=w=y$.  If say $\ell_y(w)$ is not equal to $r$,
then by the previous observation $\deg_G(w)=3$ and the stalk $S_y$ is defined according to cases (c), (d), or (f).
In the case (c), note furthermore that $S_x$ cannot be defined according to (c), as otherwise $x$ and $y$ would use the same bud.
In conclusion, the following claim (which we will refer to as ($\star$)) holds.
\begin{itemize}
\item $\ell_x(w)=\ell_y(w)$ is equal to the distance of $w$ from $v$ in $G$, and $w$ is not adjacent to $v$, or
\item $\deg_G(w)=3$ and at least one of the stalks $S_x$ and $S_y$ is defined according to cases (d) or (f), or
\item $\deg_G(w)=3$, $w$ is adjacent to $v$, one of $S_x$ and $S_y$ is defined according to (a) and the other one according to (c).
\end{itemize}
Note in particular that if a vertex $u\in V(H)\setminus\{v\}$ has degree $4$ in $G$
and $u$ is adjacent to $v$, then $u$ is the root of the stalk $S_u$ and $u$ does not belong to any other stalk.

Let $\delta=\delta_{G,4}$.  Let us first argue that $H$ satisfies (FIX).
\begin{subproof}
Consider any vertex $u\in V(H)$ and a $(\deg_H+\delta)\downarrow u$-assignment $L$ for $H$.
Let $c$ be a color in $L(u)$ and let $L'$ be the list assignment for $H-u$ obtained from $L$ by removing $c$
from the lists of neighbors of $u$.  We need to argue that $H-u$ is $L'$-colorable.

For all $z\in V(H-u)$, we have $|L'(z)|\ge \deg_{H-u}(z)+\delta(z)=\deg_{H-u}(z)+4-\deg_G(z)$.
Hence, if $z\neq v$ then $|L'(z)|\ge \deg_{H-u}(z)$, and if $\deg_G(z)=3$, then $|L'(z)|>\deg_{H-u}(z)$.
If $u=v$, then note that by the definition of a $v$-stalk, each component of $H-v$ contains a vertex whose degree in $G$ is three,
and we conclude that $H-u$ is $L'$-colorable by applying Lemma~\ref{lemma-gallai} to each component.  Hence, suppose that $u\neq v$.

Note that the definition of a $v$-stalk
ensures that $v$ has a neighbor in each component of $H-\{u,v\}$ that contains only vertices whose degree in $G$ is four.
A neighbor $z$ of $v$ in $H$ is \emph{dangerous}
if either $z=u$ or $z$ belongs to a component of $H-\{u,v\}$ that contains only vertices of degree $4$.
If at most $2$ neighbors of $v$ are dangerous, then first greedily $L'$-color the components of $H-\{u,v\}$ containing dangerous
vertices (this is possible, since these dangerous vertices are adjacent to the vertex $v$ which has not been colored yet), then give a color from $L'(v)$
to $v$ (which is possible, since $|L(v)|\ge 3$ is greater than the number of dangerous neighbors of $v$),
and finally extend the coloring to the remaining components of $H-\{u,v\}$
(which is possible, since each such component contains a vertex $z$ of degree three, which satisfies $|L'(z)|>\deg_{H-u}(z)$).
Thus, to prove that $H-u$ is $L'$-colorable, it suffices to argue that $v$ has at most two dangerous neighbors.

Suppose for a contradiction that $x$, $y$, and $q$ are distinct dangerous neighbors of $v$. Without loss of generality, $x\neq u\neq y$, and thus the components of
$H-\{u,v\}$ containing $x$ and $y$ only consist of vertices of degree $4$.
Since $\deg(x)=\deg(y)=4$ and $x$ and $y$ are adjacent to $v$, ($\star$) implies that $x$ and $y$ are the roots of stalks $S_x$ and $S_y$.
We conclude that $u\in V(S_x)\cap V(S_y)\setminus \{v\}$
and the removal of $u$ separates $x$ and $y$ from the vertices of degree $3$ in their stalks (possibly, $u$ is this vertex of degree $3$).
If $\deg(u)=3$, this implies that neither of the stalks $S_x$ and $S_y$ is defined according to (d) or (f), and clearly neither
is defined by (a).  By ($\star$), we conclude that $\ell_x(u)$ and $\ell_y(u)$ are equal to the distance $r\ge 2$ between $u$ and $v$ in $G$,
and $u$ is not a neighbor of $v$.  The same claim is implied by ($\star$) when $\deg(u)=4$.

Consequently $q\neq u$, and thus $q$ is the root of a stalk $S_q$, $u\in V(S_q)\setminus\{v\}$, and $\ell_q(u)=r$.
If $r=2$, then $H$ contains three paths
of length two between $v$ and $u$, contradicting (\dag).
Hence, $r\ge 3$.  Since $u$ is non-adjacent to $v$, together with the observation
that if $\deg(u)=3$ then the stalks $S_x$, $S_y$, and $S_q$ are not defined according to (d) and (f), this excludes the possibilities that the stalks
are defined according to the cases (a), (b), (c), (d), and when $\deg(u)=3$ also (f).  Hence,
either $\deg(u)=3$ and the stalks $S_x$, $S_y$, and $S_q$ are defined according to (e), or $\deg(u)=4$, $r=3$ and
the stalks are defined according to (f). However, it is easy to see that it is not possible to arrange the three stalks with
distinct roots in the plane without exceeding the degree of $u$ or violating the condition (\dag). This is a contradiction,
finishing the argument that (FIX) holds.
\end{subproof}

Next, let us argue that $H$ satisfies (FORB).
\begin{subproof}
Let $I$ be a $1$-independent set in $H$ of size at most $2$, and let $L$ be a $(\deg_H+\delta-1_I)$-assignment for $H$.
We need to argue that $H$ is $L$-colorable.  Let $H'$ be the induced subgraph of $H$ obtained by initializing $H'\colonequals H$
and repeatedly performing the following reductions as long as possible:
\begin{itemize}
\item If $u\in V(H')$ satisfies $|L(u)|>\deg_{H'}(u)$, then let $H'\colonequals H'-u$.
\item If $K\subseteq V(H')$ induces a $2$-connected subgraph, neither a clique nor an odd cycle, and $|L(u)|\ge \deg_{H'}(u)$ for all $x\in K$,
then let $H'\colonequals H'-K$.
\end{itemize}
Note that $H'$ is uniquely determined, regardless of the choice of removed subgraphs, since the degrees in $H'$ do not increase,
and if a part of a set $K$ satisfying the assumptions of the second reduction gets removed by another reduction,
then all vertices of $K$ will eventually be removed because of the first reduction.
Using Lemma~\ref{lemma-gallai}, it is easy to see that $H$ is $L$-colorable if and only if $H'$ is $L$-colorable.

\begin{figure}
\includegraphics[scale=0.65]{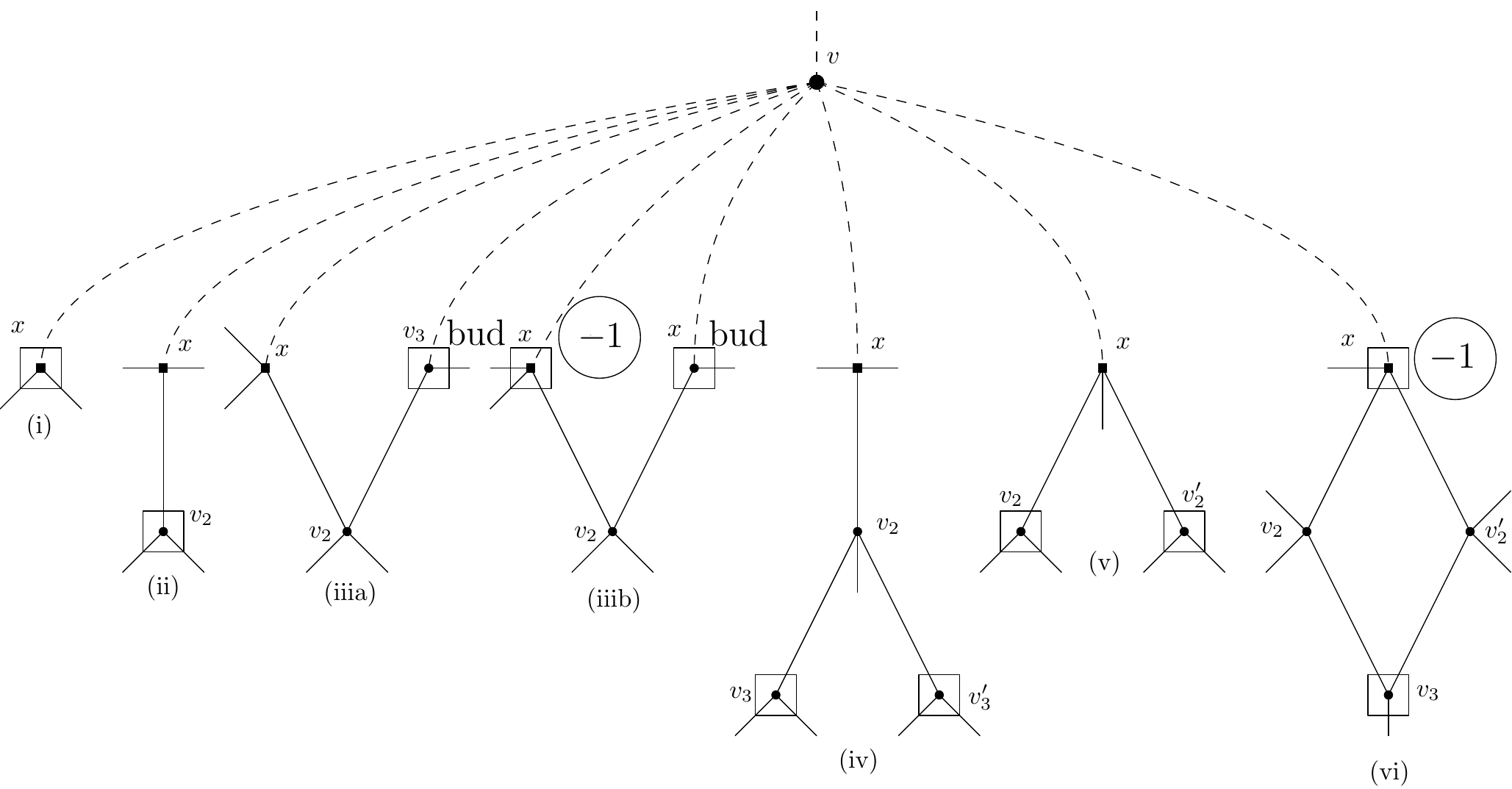}
\caption{Reduced stalks. Vertices in $I$ are marked by squares. Vertices $u\neq v$ with $|L(u)|=\deg_{H'}(u)-1$ are marked by $-1$.}\label{fig-redstalk}
\end{figure}

Note that for $u\in V(H)\setminus \{v\}$, we have $|L(u)|>\deg_H(u)$ if $\deg_G(u)=3$ and $u\not\in I$,
$|L(u)|\ge\deg_H(u)$ if $\deg_G(u)+1_I(u)=4$, and $|L(u)|\ge\deg_H(u)-1$ if $\deg_G(u)=4$ and $u\in I$.
Consider a vertex $x\in X$ and the $v$-stalk $S_x$.  A straightforward case analysis shows that if $v\in V(H')$,
then either $S_x\cap H'=v$, or $S_x\cap H'$ satisfies one of the following conditions (see Figure~\ref{fig-redstalk}).
\begin{itemize}
\item[(i)] $S_x\cap H'$ consists of the edge $vx$, $x\in I$, and $|L(x)|=\deg_{H'}(x)$; or,
\item[(ii)] $S_x\cap H'$ is a path $vxv_2$, $\deg_G(x)=4$, $v_2\in I$, $|L(v_2)|=\deg_{H'}(v_2)$, and $|L(x)|=\deg_{H'}(x)$; or,
\item[(iii)] $S_x\cap H'$ is a $4$-cycle $vxv_2v_3$, the $v$-stalk $S_x$ satisfies (c) and $v_3$ is its bud,
$v_3\in I$, $|L(v_2)|=\deg_{H'}(v_2)$, $|L(v_3)|=\deg_{H'}(v_3)$,
and either
\begin{itemize}
\item[(iiia)] $|L(x)|=\deg_{H'}(x)$, or
\item[(iiib)] $x\in I$ and $|L(x)|=\deg_{H'}(x)-1$;
\end{itemize}
or,
\item[(iv)] $S_x\cap H'$ consists of a path $vxv_2v_3$ and an edge $v_2v'_3$,
$v_3,v'_3\in I$, $\deg_G(x)=\deg_G(v_2)=4$, $|L(x)|=\deg_{H'}(x)$, $|L(v_2)|=\deg_{H'}(v_2)$, $|L(v_3)|=\deg_{H'}(v_3)$, and
$|L(v'_3)|=\deg_{H'}(v'_3)$; or,
\item[(v)] $S_x\cap H'$ consists of a path $vxv_2$ and an edge $xv'_2$,
$v_2,v'_2\in I$, $\deg_G(x)=\deg_G(v_2)=\deg_G(v'_2)=4$, $|L(x)|=\deg_{H'}(x)$, $|L(v_2)|=\deg_{H'}(v_2)$, and $|L(v'_2)|=\deg_{H'}(v'_2)$; or,
\item[(vi)] $S_x\cap H'$ consists of an edge $vx$ and a $4$-cycle $xv_2v_3v'_2$,
$x,v_3\in I$, $\deg_G(x)=\deg_G(v_2)=\deg_G(v'_2)=4$, $\deg_G(v_3)=3$, $|L(v_2)|=\deg_{H'}(v_2)$, $|L(v'_2)|=\deg_{H'}(v'_2)$,
$|L(v_3)|=\deg_{H'}(v_3)$, and $|L(x)|=\deg_{H'}(x)-1$.
\end{itemize}
The same case analysis also shows that if $v\not\in V(H')$, then $V(H')=\emptyset$, and thus $H'$ and $H$ are $L$-colorable.
Hence, we can assume that $v\in V(H')$.  Analogously, we conclude that $H'-v$ is $L$-colorable.
Note also that $S_x\cap H'$ is an induced subgraph of $H'$, except possibly for the
case (iv), where $v_3$ or $v'_3$ can be adjacent to $v$ (but not both by (\dag)).

Furthermore, if $z\in V(H')\setminus\{v\}$ satisfies $|L(z)|<\deg_{H'}(z)$, then
$z$ is the root of a $v$-stalk $S_z$ such that $S_z\cap H'$ satisfies (iiib) or (vi), and in particular $S_z\cap H'$ contains
the other vertex $z'$ of $I$, and $z'$ satisfies $|L(z')|=\deg_{H'}(z')$.  Consequently, $|L(z)|<\deg_{H'}(z)$ for at most one vertex $z\in V(H')\setminus\{v\}$.

Let $C_1$, \ldots, $C_k$ be the vertex sets of the components of $H'-v$, and let $c_1$, \ldots, $c_k$ be the corresponding integers defined in the statement Corollary~\ref{cor-rem}.
Each of the components contains a vertex of $I$, and thus $k\le |I\setminus\{v\}|$.
We now argue that $c_1+\ldots+c_k<|L(v)|$, implying that $H'$ is $L$-colorable by Corollary~\ref{cor-rem}, and thus $H$ is $L$-colorable as well.
Let $X'=\{x\in X:S_x\cap H'\neq v\}$.

Let us first consider the case that $C_1\cap I$ consists of exactly one vertex $z$.
If $V(S_x)\cap C_1\neq \emptyset$ for some $x\in X'$, then $S_x\cap H'$
satisfies (i), (ii), or (iiia).
If $z$ is adjacent to $v$, then all such $v$-stalks satisfy (i) or (iiia), and there is at most one such $v$-stalk satisfying (iiia)
since no two $v$-stalks satisfying (c) use the same bud.  Hence, $H'[C_1\cup\{v\}]$ is either an edge or a $4$-cycle,
and $c_1=1$.  If $z$ is at distance $2$ from $v$ in $H'$, then the $v$-stalks intersecting $C_1$ satisfy (ii),
and there are at most two such $v$-stalks sharing the vertex $z$ by (\dag).  Hence, $c_1=1$ again.
Note that $|L(v)|\ge 3-1_I(v)\ge 2$, and thus if $k=1$, then $c_1+\ldots+c_k=c_1=1<|L(v)|$.
If $k=2$, then $v\not\in I$ since $|I|\le 2$ and $C_2\cap I\neq\emptyset$, and thus $|L(v)|\ge 3$; furthermore,
the same argument shows $c_2=1$, and consequently $c_1+\ldots+c_k=c_1+c_2=2<|L(v)|$.

Therefore, we can assume that $C_1$ contains two vertices of $I$, and since $|I|\le 2$, we have $k=1$ and $v\not\in I$; hence, $|L(v)|\ge 3$,
and it suffices to argue that $c_1\le 2$.

If there exists a vertex $z\in V(H')\setminus \{v\}$ with $|L(z)|<\deg_{H'}(z)$, then $z$ is the root of the $v$-stalk $S_z$ and
$S_z\cap H'$ satisfies (iiib) or (vi).  The root $z$ is in a unique stalk by ($\star$), so there are no $v$-stalks
satisfying (iv) or (v), or a $v$-stalk satisfying (iiib) or (vi) other than $S_z$. The vertex of $I$ other than $z$ cannot belong to a stalk
satisfying (ii) or (iiia) by the absence of triangles, (\dag), and the assumption that no two $v$-stalks use the same bud.  Consequently, $H'=S_z\cap H'$, and thus $c_1\le 2$.

Hence, we can assume that $|L(z)|=\deg_{H'}(z)$ for all $z\in V(H')\setminus \{v\}$, and in particular no $v$-stalk satisfies (iiib) or (vi).
Let us now consider the case that $S_x\cap H'$ satisfies (v) for some $x\in X'$.  We cannot have another $v$-stalk satisfying
(iv) or (v), since then $H'-v$ would contain a $4$-cycle that should be removed by the second reduction rule.
Furthermore, the vertices of $I$ are not adjacent to $v$ in this case, and thus 
$S_y\cap H'$ satisfies (ii) for all vertices $y\in X'\setminus\{x\}$.  By (\dag), we conclude that $|X'|\le 3$.  Note also that if $|X'|=3$, then $c_1=|X'|-1$.
Hence, $c_1\le 2$.  Therefore, we can assume that no $v$-stalk satisfies (v).

Suppose now that $S_x\cap H'$ satisfies (iv) for some $x\in X'$.  If $X'$ contains another vertex $y$ with this property,
then since $H'-v$ does not contain a $4$-cycle (which would be removed by the second reduction rule),
we conclude that $(S_x\cup S_y)\cap H'$ consist of a $4$-cycle $vxv_2y$ and vertices $v_3, v'_3\in I$ adjacent to $v_2$,
and by (\dag), neither $v_3$ nor $v'_3$ is adjacent to $v$.
In this case, (\dag) implies that $H'=(S_x\cup S_y)\cap H'$,
and thus $c_1=1$.  Hence, we can assume that $X'\setminus\{x\}$ does not contain any vertex $y$ such that $S_y\cap H'$ satisfies (iv).
If neither of the vertices of $I$ is adjacent to $v$, then $S_y\cap H'$ satisfies (ii) for all $y\in X\setminus\{x\}$,
and $|X'|\le 3$ by (\dag), and $c_1=|X'|-1$ if $|X'|=3$.  Hence, $c_1\le 2$.  Finally, let us consider the case that
a vertex $z\in I$ is adjacent to $v$.  Then the other vertex of $I$ is not contained in another $v$-stalk by (\dag),
and $z$ can be contained in at most one $v$-stalk satisfying (iiia); hence $c_1\le 2$.

Therefore, we can assume that $S_x\cap H'$ satisfies (i), (ii), or (iiia) for every $x\in X'$.
Suppose that all vertices of $I$ are adjacent to $v$, and thus $S_x\cap H'$ satisfies (i) or (iiia) for every $x\in X'$.
By (\dag) and the assumption that no two $v$-stalks use the same bud,
either $H'=S_x\cap H'$ for some $x\in X'$, or there exist distinct $x,y\in X'$
such that $S_x\cap S_y\cap H'=v$.
In the former case, we have $c_1=1$.  In the latter case, since $H'[C_1]$ is connected,
we can also assume there is an edge between $S_x\cap H'-v$ and $S_y\cap H'-v$;
however, this is not possible, since $H'$ is triangle-free and satisfies (\dag).

Finally, suppose that there exists a vertex $z\in I$ non-adjacent to $v$, which necessarily is contained
in a stalk $S_x$ such that $S_x\cap H'$ satisfies (ii).  Let $X'_z=\{x\in X':z\in V(S_x\cap H')\}$ and let $H'_z=\bigcup_{x\in X'_z} S_x\cap H'$.
By (\dag), $|X'_z|\le 2$, and since $G$ is triangle-free, we have $S_y\cap H'_z=v$ for all $y\in X'\setminus X'_z$.
Since $|C_1\cap I|=2$, there exists $y\in X'\setminus X'_z$ such that $z\not\in V(S_y)$.  Since $H'[C_1]$ is
connected, we can furthermore choose $y$ so that there is an edge $e$ between $H'_z-v$ and $S_y\cap H'-v$.
Since $H'$ is triangle-free and $I$ is an independent set, $S_y\cap H'$ does not satisfy (i), and thus it satisfies (ii) or (iiia).
If it satisfies (iiia), then (\dag) implies that $|X'_z|=1$, $e=yz$ and $H'=H'_z\cup (S_y\cap H')+e$, and $c_1=2$.
If $S_y\cap H'$ satisfies (ii), then (\dag) similarly implies that $|X'|\le 3$, $S_y\cap H'$ satisfies (ii) for all $y\in X'$,
and $H'=\big(\bigcup_{y\in X'} S_y\cap H'\big)+e$, and $c_1\le 2$.

Hence, in all the cases, $H$ is $L$-colorable, and (FORB) holds.
\end{subproof}
Since $H$ satisfies both (FIX) and (FORB), it forms a $(1,4)$-reducible induced subgraph of $G$.
\end{proof}

\begin{figure}
\begin{center}
\includegraphics[scale=0.7]{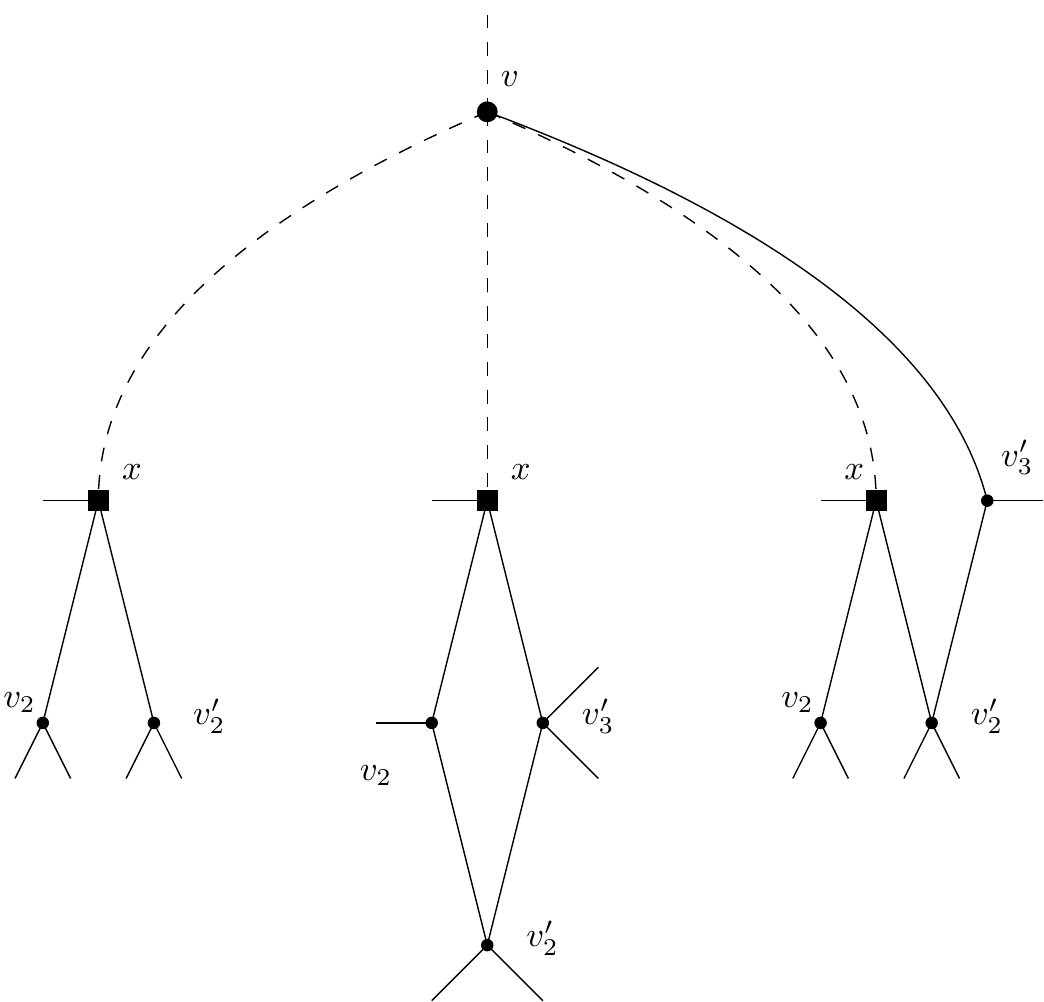}
\end{center}
\caption{Excellent stalks extending a (b) stalk.}\label{fig-extstalk}
\end{figure}

Let $G$ be a triangle-free graph, let $C$ be a subgraph of $G$, and let $v$ be a vertex of $G$.
We say that a neighbor $x$ of $v$ is \emph{$(v,C)$-excellent} if one of the following conditions holds.
\begin{itemize}
\item $x$ is $(v,C)$-good due to a $v$-stalk satisfying
(a), (d), (e), or (f), or
\item $x$ is $(v,C)$-good due to a $v$-stalk satisfying (b) such that (see Figure~\ref{fig-extstalk})
\begin{itemize}
\item $x=v_1$ has a neighbor $v'_2\neq v_2$ of degree $3$ not belonging to $C$, or
\item there exists a $4$-cycle $v_1v_2v'_3v'_2$ (where $x=v_1$) with $v'_2,v'_3\not\in V(C)$ and $\deg(v'_2)=\deg(v'_3)=4$, or
\item there exists a $4$-cycle $v_1v'_2v'_3v$ (where $x=v_1$) with $v'_2,v'_3\not\in V(C)$, $\deg(v'_2)=4$ and $\deg(v'_3)=3$.
\end{itemize}
\end{itemize}
The \emph{extended $v$-stalk} of a $(v,C)$-excellent vertex is its $v$-stalk, together with the edge $v_1v'_2$, the
path $v_2v'_3v'_2v_1$, or the path $v_1v'_2v'_3v$ if the $v$-stalk satisfies one of the last three cases, respectively.

\begin{figure}
\begin{center}
\includegraphics[scale=0.7]{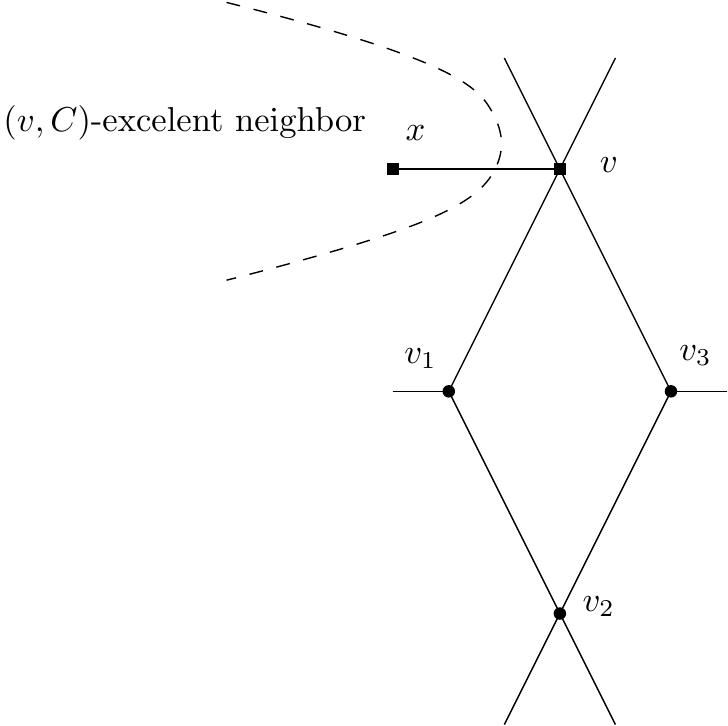}
\end{center}
\caption{Situation in Lemma~\ref{lemma-5redu}.}\label{fig-l8}
\end{figure}

We now give another class of reducible configurations at vertex of degree five.

\begin{lemma}\label{lemma-5redu}
Let $G$ be a plane triangle-free graph with the outer face bounded by a cycle $C$
such that each $(\le\!5)$-cycle in $G$ bounds a face.
Let $v$ be a vertex of $G$ of degree $5$ incident with a $4$-face bounded by a cycle
$vv_1v_2v_3$ such that $\deg(v_1)=\deg(v_3)=3$, $\deg(v_2)=4$, and $v,v_1,v_2,v_3\not\in V(C)$.
If $v$ has a $(v,C)$-excellent neighbor $x$ distinct from $v_1$ and $v_3$, then $G$ contains
a $(1,4)$-reducible induced subgraph with at most $10$ vertices, vertex-disjoint from $C$.
\end{lemma}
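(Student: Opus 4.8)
The plan is to build the reducible subgraph $H$ by combining the $4$-face stalk at $v$ (the cycle $vv_1v_2v_3$, which is a $v$-stalk of type (c) with bud $v_3$) with the extended $v$-stalk $S_x^+$ of the $(v,C)$-excellent neighbor $x$, and to show that $H$ is $(1,4)$-reducible using the same machinery as in Lemma~\ref{lemma-mainredu}. Concretely, I would let $H$ be the subgraph of $G$ induced by $\{v,v_1,v_2,v_3\}\cup V(S_x^+)$. Since an extended $v$-stalk has at most $6$ vertices besides $v$ (the stalk itself has at most $5$ non-root-path vertices, plus at most one or two more for the extension), and the $4$-face contributes $v_1,v_2,v_3$, we get $|V(H)|\le 10$; disjointness from $C$ is immediate from the hypotheses. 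The key structural observation, exactly as in Lemma~\ref{lemma-mainredu}, is that each $(\le 5)$-cycle bounds a face and $C$ is disjoint from $H$, so any non-cycle subgraph $Q$ of $H$ has a face not bounded by a $(\le 5)$-cycle; this controls the possible overlaps between the $4$-face stalk and $S_x^+$, and between the two "halves" of $S_x^+$ in the type-(b) extension cases. I would record which vertices of $H\setminus\{v\}$ have $G$-degree $3$ (these get a strictly larger list than $H$-degree under any $(\deg_H+\delta_{G,4})$-type assignment) versus degree $4$, observing that in every one of the three (b)-extension configurations the extended part contributes an extra degree-$3$ vertex or an extra edge incident to the root, which is precisely what "excellent" buys us over merely "good".

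For (FIX), given $u\in V(H)$, a $((\deg_H+\delta_{G,4})\downarrow u)$-assignment $L$, a color $c\in L(u)$, and the induced assignment $L'$ on $H-u$ (with $c$ removed from neighbors of $u$), I would follow the Lemma~\ref{lemma-mainredu} template: every $z\in V(H-u)\setminus\{v\}$ has $|L'(z)|\ge\deg_{H-u}(z)$, with strict inequality when $\deg_G(z)=3$, and $v$ has $|L(v)|\ge 3$. If $u=v$ then each component of $H-v$ contains a degree-$3$ vertex (the $4$-face contributes $v_1$ and $v_3$ of degree $3$; the extended stalk contributes its own degree-$3$ terminal vertex), so Lemma~\ref{lemma-gallai} finishes each component. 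If $u\ne v$, I would define a neighbor $z$ of $v$ in $H$ to be \emph{dangerous} if $z=u$ or $z$ lies in a component of $H-\{u,v\}$ consisting only of degree-$4$ vertices, color dangerous components greedily first, then color $v$, then finish the remaining components (each containing a degree-$3$ vertex with slack). It suffices to show $v$ has at most two dangerous neighbors. Here $v$ has degree $5$, and among its neighbors in $H$ we have $v_1$ and $v_3$ of degree $3$ (never dangerous once their stalk vertex $v_2$ isn't removed — and if $u=v_2$ then the $v$-stalk $vv_1v_2v_3$ contributes at most the single dangerous vertex $u=v_2$), plus $x$ and whatever other neighbors of $v$ land in $H$. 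A short case analysis on $u$ — split by whether $u$ lies in the $4$-face stalk, in the root-path of $S_x^+$, or in the extension part, and use ($\dag$) to forbid three internally disjoint short paths from $v$ — shows at most two dangerous neighbors in every case; I expect this to be the bulk of the work but entirely analogous to the $(\star)$-based argument already carried out.

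For (FORB), given a $1$-independent set $I\subseteq V(H)$ with $|I|\le 2$ and a $(\deg_H+\delta_{G,4}-1_I)$-assignment $L$, I would again form the core $H'$ by iteratively deleting vertices $u$ with $|L(u)|>\deg_{H'}(u)$ and $2$-connected non-clique non-odd-cycle blocks on which $|L(u)|\ge\deg_{H'}(u)$ everywhere; by Lemma~\ref{lemma-gallai}, $H$ is $L$-colorable iff $H'$ is. As in Lemma~\ref{lemma-mainredu}, either $V(H')=\emptyset$ (done) or $v\in V(H')$, $H'-v$ is $L$-colorable, and each original stalk meets $H'$ in one of a short list of reduced shapes — here the $4$-face stalk $vv_1v_2v_3$ reduces to either $\{v\}$, or the edge $vv_1$, or the edge $vv_3$, or (when $v_3\in I$) the $4$-cycle, and at most one vertex of $H'\setminus\{v\}$ has a list deficit. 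I would then invoke Corollary~\ref{cor-rem} with the components $C_1,\dots,C_k$ of $H'-v$: since each $C_i$ contains a vertex of $I$, $k\le|I\setminus\{v\}|\le 2$, and I must check $c_1+\dots+c_k<|L(v)|$. Using $|L(v)|\ge 3-1_I(v)$, the cases $k=1$ with one vs.\ two $I$-vertices in $C_1$ (and $k=2$) are handled exactly as before: the only thing to verify is that the reduced $4$-face stalk plus the reduced extended stalk, glued at $v$ and possibly by one extra edge forced by connectivity of $H'[C_1]$, give $c_1\le 2$ (resp.\ $c_1+c_2\le 2$), which ($\dag$) and triangle-freeness enforce because three internally disjoint short $v$–$z$ connections are impossible. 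The main obstacle, as in the previous lemma, is the disciplined enumeration of how the two stalks can overlap or be joined by an edge inside the reduced graph; the degree-$5$ slack at $v$ and the extra degree-$3$ vertex or extra root-edge guaranteed by "excellent" are exactly what make the count go through, and I would organize the write-up to mirror the (FORB) subproof of Lemma~\ref{lemma-mainredu} case by case.
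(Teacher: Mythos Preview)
Your overall plan matches the paper's---define $H$ as the induced subgraph on $\{v,v_1,v_2,v_3\}\cup V(S'_x)$ and verify (FIX) and (FORB)---but there is a genuine arithmetic gap that breaks both halves as you have stated them. You write ``$v$ has $|L(v)|\ge 3$'' in (FIX) and ``$|L(v)|\ge 3-1_I(v)$'' in (FORB), carried over from the Lemma~\ref{lemma-mainredu} template. That bound held there because $v$ had $d-1$ good neighbors inside $H$, giving $\deg_H(v)\ge d-1$ and hence $\deg_H(v)+\delta_{G,4}(v)\ge 3$. Here $\deg_G(v)=5$ but only three neighbors of $v$ ($v_1$, $v_3$, $x$) are guaranteed to lie in $H$; for most extended stalks (type (a), (d), (e), (f), and two of the three (b)-extensions) no further neighbor of $v$ enters $H$, so $\deg_H(v)=3$ and $|L(v)|\ge \deg_H(v)-1=2$ only. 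Consequently your target ``at most two dangerous neighbors'' is not enough: you must show at most \emph{one}. The paper does this via two observations you do not mention: first, $x$ is the \emph{only} neighbor of $v$ in $H$ of $G$-degree four (checked by inspecting each extended stalk, using triangle-freeness and (\dag)), so dangerous neighbors lie in $\{x,u\}$; second, if $u\neq x$ is adjacent to $v$, then $S'_x-u$ still contains a path from $x$ to a degree-$3$ vertex, so $x$ is not dangerous. Also note your description of the $4$-face as ``a $v$-stalk of type (c) with bud $v_3$'' is off: a (c)-stalk has $\deg(v_1)=4$, whereas here $\deg(v_1)=3$.

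For (FORB) your Corollary~\ref{cor-rem} route inherits the same off-by-one problem and is in any case more work than needed. The paper's argument is much shorter: one shows directly that $H'=\emptyset$. If at least one of $v_1,v_3$ is not in $I$, then $v_1,v_2,v_3$ all peel off by the first reduction rule; then either $v\notin I$ and $v$ peels off (after which the extended stalk collapses), or $v\in I$ and the remaining vertex of $I$ cannot lie in $S'_x$ in a way matching any of the reduced shapes (i)--(vi), except possibly (ii) when $S_x$ is of type (b)---and that is exactly where the ``excellent'' extension finishes the reduction. If $I=\{v_1,v_3\}$, both vertices of $I$ are degree-$3$ neighbors of $v$, so none of (i)--(vi) can occur in $S'_x\cap H'$, whence $S'_x\cap H'\subseteq\{v\}$ and the $4$-cycle $vv_1v_2v_3$ is removed by the second rule. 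No invocation of Corollary~\ref{cor-rem} is required.
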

\begin{proof}
Let $S_x\subseteq S'_x$ be the $v$-stalk and the extended $v$-stalk with root $x$, respectively.
Let $H$ be the subgraph of $G$ induced by $\{v,v_1,v_2,v_3\}\cup V(S'_x)$.
Clearly, $|V(H)|\le 10$, and thus it suffices to prove that $H$ is $(1,4)$-reducible.

Let $\delta=\delta_{G,4}$.  Let us first argue that $H$ satisfies (FIX).
\begin{subproof}
Consider a vertex $u\in V(H)$ and a $(\deg_H+\delta)\downarrow u$-assignment $L$ for $H$.
Let $c$ be a color in $L(u)$ and let $L'$ be the list assignment for $H-u$ obtained from $L$ by removing $c$
from the lists of neighbors of $u$.  We need to argue that $H-u$ is $L'$-colorable.
Note that $|L'(z)|\ge \deg_{H-u}(z)$ for all $z\in V(H-u)\setminus\{v\}$, and $|L'(z)|>\deg_{H-u}(z)$ if $\deg_G(z)=3$.
By the definition of an extended $v$-stalk and the assumptions on the degrees of $v_1$ and $v_3$,
each component of $H-v$ contains a vertex whose degree in $G$ is three, and thus if $u=v$, then $H-u$ is $L'$-colorable.  Hence, suppose that $u\neq v$.

Note that the definition of an extended $v$-stalk
ensures that $v$ has a neighbor in each component of $H-\{u,v\}$ that contains only vertices whose degree in $G$ is four.
Recall a neighbor $z$ of $v$ in $H$ is \emph{dangerous}
if either $z=u$ or $z$ belongs to a component of $H-\{u,v\}$ that contains only vertices of degree $4$.
Inspecting all possible extended $v$-stalks $S'_x$, we conclude using (\dag) and the assumption $G$ is triangle-free that
$v$ has no neighbors in $H$ other than $x$ whose degree in $G$ is four.  Consequently, $v$ cannot have dangerous neighbors other than $x$ and $u$.
Furthermore, the same inspection shows that if $u\neq x$ and $u$ is adjacent to $v$, then $S'_x-u$ contains a path from $x$ to a vertex whose degree in $G$ is three, and thus $x$ is not dangerous.

Therefore, $v$ has at most one dangerous neighbor.  We first greedily $L'$-color the component of $H-\{u,v\}$ containing the dangerous
vertex, if any (this is possible, since the dangerous vertex is adjacent to the vertex $v$ which has not been colored yet), then give a color from $L'(v)$
to $v$ (which is possible, since $|L(v)|\ge 2$ is greater than the number of dangerous neighbors of $v$),
and finally extend the coloring to the remaining components of $H-\{u,v\}$
(which is possible, since each such component contains a vertex whose degree in $G$ is three).
Therefore, $H-u$ is $L'$-colorable, implying (FIX).
\end{subproof}

Next, we show $H$ satisfies (FORB).
\begin{subproof}
Let $I$ be a $1$-independent set in $H$ of size at most $2$, and let $L$ be a $(\deg_H+\delta-1_I)$-assignment for $H$.
We need to argue that $H$ is $L$-colorable.  Let $H'$ be the induced subgraph of $H$ obtained by the same reduction rules
as in the proof of Lemma~\ref{lemma-mainredu}; it suffices to prove that $H'$ is $L$-colorable.

Suppose first that at least one of $v_1$ or $v_3$ does not belong to $I$, and thus $v_1$, $v_2$, and $v_3$
get removed according to the first reduction rule.   If $v\not\in I$, then $v$ also gets removed by the first
reduction rule, and then it is easy to see that $H'$ is empty.  If $v\in I$, then $S_x\cap H'$ is
not described by (i), (iii), (iv), (v), and (vi) from the proof of Lemma~\ref{lemma-mainredu},
since $I$ is an independent set and $|I\setminus\{v\}|\le 1$;
we conclude that $H'$ is empty unless $S_x$ satisfies (b).  But in that case, the inspection of the extended $v$-stalks
shows that $S'_x\cap H'=\emptyset$.  In all the cases, we conclude that $H'$ is empty, and thus $H$ is $L$-colorable.

Hence, we can assume that $I=\{v_1,v_3\}$.  In particular, both vertices of $I$ are adjacent to $v$
and have degree three in $G$, and since $G$ is triangle-free and satisfies (\dag),
we conclude that $S_x\cap H'$ cannot satisfy any of the cases (i) to (vi).  This implies $S'_x\cap H'\subseteq v$.
It follows that the $4$-cycle $vv_1v_2v_3$ also gets reduced and $H'$ is empty.
Therefore, $H$ is $L$-colorable, implying (FORB).
\end{subproof}
Hence, both (FIX) and (FORB) hold, and thus $H$ is $(1,4)$-reducible.
\end{proof}

\begin{figure}
\begin{center}
\includegraphics[scale=0.7]{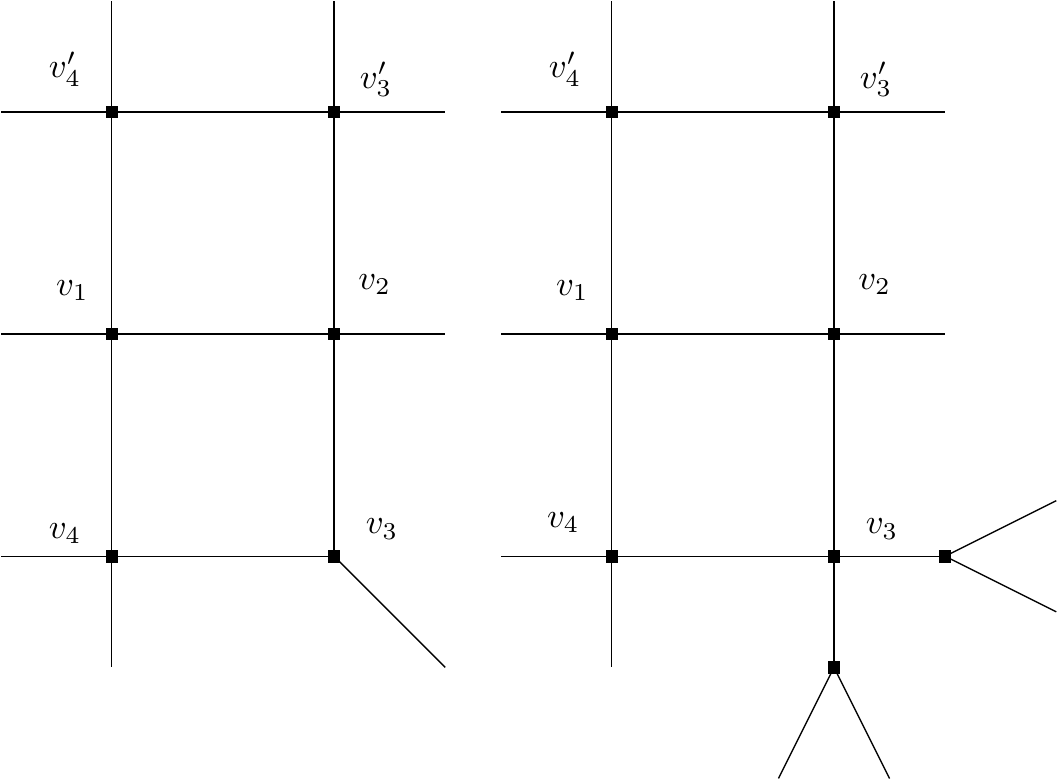}
\end{center}
\caption{Situations in Lemma~\ref{lemma-spec4}.}\label{fig-l10}
\end{figure}

Finally, let us describe reducible configurations at adjacent 4-faces.

\begin{lemma}\label{lemma-spec4}
Let $G$ be a plane triangle-free graph with the outer face bounded by a cycle $C$
such that each $(\le\!5)$-cycle in $G$ bounds a face.
Let $v_1v_2v_3v_4$ and $v_1v_2v_3'v_4'$ be cycles bounding distinct $4$-faces
(sharing the edge $v_1v_2$), vertex-disjoint from $C$, and such that $\deg(v_1)=\deg(v_2)=\deg(v_3)=\deg(v'_3)=\deg(v'_4)=4$.
If either $\deg(v_3)=3$, or $\deg(v_3)=4$ and $v_3$ has two neighbors of degree three not belonging to $V(C)$,
then $G$ contains a $(1,4)$-reducible induced subgraph with at most $8$ vertices, vertex-disjoint from $C$.
\end{lemma}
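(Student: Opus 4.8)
The plan is to follow the template of Lemmas~\ref{lemma-mainredu} and~\ref{lemma-5redu}: exhibit an explicit induced subgraph $H$ of $G$, vertex-disjoint from $C$, with $|V(H)|\le 8$, and verify (FIX) and (FORB) by hand, using Lemma~\ref{lemma-gallai} and Corollary~\ref{cor-rem}. Write $F=\{v_1,v_2,v_3,v_4,v'_3,v'_4\}$ for the set of vertices on the two adjacent $4$-faces, and (up to the mirror symmetry $v_1\leftrightarrow v_2$, $v_3\leftrightarrow v_4$, $v'_3\leftrightarrow v'_4$ of the configuration) let $v_3$ be the distinguished vertex. Since $G$ is triangle-free and every $(\le\!5)$-cycle bounds a face (the analogue of~(\dag) in Lemma~\ref{lemma-mainredu}), $G[F]$ is exactly the theta-graph $\Theta$ obtained from the $6$-cycle $v_4v_1v'_4v'_3v_2v_3$ by adding the chord $v_1v_2$, in which $v_1,v_2$ have degree $3$ and the remaining four vertices have degree $2$. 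When $\deg(v_3)=3$ I take $H=G[F]$, so $|V(H)|=6$; when $\deg(v_3)=4$ with degree-three neighbours $a,b\notin V(C)$, I take $H=G[F\cup\{a,b\}]$, where $a,b$ are distinct from the vertices of $F$ (those have degree $4$) and, by triangle-freeness and~(\dag), pendant at $v_3$ in $H$, so $|V(H)|=8$. Put $\delta=\delta_{G,4}$; the vertices of $H$ with $\delta>0$ are exactly the degree-three ones ($v_3$ in the first case, $a$ and $b$ in the second), and these are the \emph{slack} vertices in the sense that their target list size $\deg_H+\delta$ strictly exceeds their degree in every subgraph of $H$.

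To check (FIX), fix $u\in V(H)$, a colour $c$, and a $((\deg_H+\delta)\downarrow u)$-assignment $L$, and delete $c$ from the lists of the $H$-neighbours of $u$, obtaining $L'$ on $H-u$; then $|L'(z)|\ge\deg_{H-u}(z)$ for all $z\notin\{v_1,v_2\}$, with strict inequality when $z$ is a slack vertex, and $|L'(v_i)|\ge\deg_{H-u}(v_i)$ for $i\in\{1,2\}$. The graph $H-u$ is a disjoint union of one small piece (a tadpole, a path, or a short spider) with possibly some isolated slack vertices, and is $L'$-colourable by Lemma~\ref{lemma-gallai}: each component either contains a slack vertex with a spare colour, or still has a $4$-cycle as a $2$-connected block, which is neither a clique nor an odd cycle. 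The only care needed is to colour a pendant neighbour of $u$ in $H-u$ immediately after $u$, so that its single surviving colour cannot be pre-empted by its other, still-uncoloured, neighbour. Running through the few choices of $u$ in this way yields (FIX).

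To check (FORB), fix a $1$-independent set $I$ with $|I|\le 2$ and a $(\deg_H+\delta-1_I)$-assignment $L$, apply the two reduction rules from the proof of Lemma~\ref{lemma-mainredu} (delete a vertex whose list exceeds its current degree; delete a $2$-connected non-clique non-odd-cycle induced subgraph all of whose lists are at least the current degree) to obtain $H'$, and colour $H'$. Whenever $I$ omits a slack vertex, that vertex is deleted first, which lowers the degree of $v_3$ (in the second case) or of $v_2$, and a cascade of further deletions empties $H'$; in the $\deg(v_3)=4$ case two slack vertices are available, so this always applies, even for $I=\{a,b\}$ (then the $4$-cycle $v_1v_2v_3v_4$ still satisfies the hypotheses of the second rule and is deleted, after which $H'$ empties). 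The only configuration admitting no free reduction is $I=\{v_1,v_3\}$ with $\deg(v_3)=3$, where $H'=H=\Theta$, the vertex $v_2$ has list of size $3$, and every other vertex has list of size $2$. Here one uses that a theta-graph of type $\Theta_{1,3,3}$ with one degree-three vertex given a list of size $3$ and all remaining vertices lists of size $2$ is always $L$-colourable: although $\Theta_{1,3,3}$ itself is not $2$-choosable, a short case analysis on the colour chosen for $v_2$ shows that non-colourability would force all three colours of $v_2$ to lie simultaneously in the size-$2$ lists of \emph{both} neighbours of $v_2$ on the two long paths, which is absurd (equivalently, apply Corollary~\ref{cor-rem} at $v_2$ after bounding the relevant $c_i$).

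The step I expect to dominate the proof is the (FORB) bookkeeping: verifying that the cascades really reach the empty graph for every admissible placement of $I$, and checking that any extra adjacencies among $\{v_4,v'_3,v'_4\}$ and $\{a,b\}$ that triangle-freeness, planarity and~(\dag) still permit only help — exactly as the (FORB) subproof dominated the proof of Lemma~\ref{lemma-mainredu}. The single genuinely non-cascading case, $I=\{v_1,v_3\}$, is the one place where a bare degeneracy argument fails and the small list-colouring fact about $\Theta_{1,3,3}$ above is needed.
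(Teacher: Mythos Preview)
Your overall strategy coincides with the paper's: take $H=G[F]$ (resp.\ $G[F\cup\{a,b\}]$), verify (FIX) via Lemma~\ref{lemma-gallai} (each component of $H-u$ either contains a slack vertex or retains a $4$-cycle block), and for (FORB) run the two reduction rules until only the single non-cascading case $I=\{v_1,v_3\}$ with $\deg(v_3)=3$ remains. The paper organises the (FORB) case split a little differently, but reaches the same endpoint, so up to here you are on the paper's track.

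The genuine gap is your treatment of that remaining case. Neither suggested argument works as stated:
\begin{itemize}
\item Corollary~\ref{cor-rem} at $v_2$: the single component $C_1=V(H)\setminus\{v_2\}$ contains $v_1$, and $|L(v_1)|=2<3=\deg_H(v_1)$, so the hypothesis ``$|L(x)|\ge\deg(x)$ for all $x\in C_i$'' fails and $c_1=n_1=3$. You would then need $|L(v_2)|>3$, but only $|L(v_2)|\ge 3$ is guaranteed. (Applying the corollary at $v_1$ instead gives $c_1=2$, but then you need $|L(v_1)|>2$, which also fails.)
\item The case analysis ``colour $v_2=c$ with $c\notin L(v_3)\cap L(v'_3)$ and extend'' does not go through: after colouring $v_2$, the vertex $v_1$ may drop to a single available colour while having degree $2$ in the remaining $5$-path, and that path need not be $L'$-colourable. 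Concretely, take $L(v_1)=\{1,2\}$, $L(v_2)=\{1,2,3\}$, $L(v_3)=\{4,5\}$, $L(v_4)=\{2,4\}$, $L(v'_3)=\{1,3\}$, $L(v'_4)=\{2,3\}$. Here $1\notin L(v_3)$, yet $v_2=1$ forces $v_1=2$, then $v'_4=3$, leaving $v'_3$ with no colour. (The graph \emph{is} $L$-colourable, e.g.\ via $v_2=2$, but your stated reasoning does not find it.)
\end{itemize}
The paper's fix avoids colouring $v_2$ first: if $L(v_1)\cap L(v_3)\neq\emptyset$, give $v_1$ and $v_3$ a common colour and greedily colour $v_4,v'_4,v'_3,v_2$; otherwise $|L(v_1)\cup L(v_3)|\ge 4>|L(v_2)|$, so some $c\in L(v_i)$ ($i\in\{1,3\}$) lies outside $L(v_2)$, colour $v_i=c$, and greedily colour $v_4,v_{4-i},v'_4,v'_3,v_2$ --- the point being that $v_2$, coloured last, still has three colours for its three neighbours since $c\notin L(v_2)$. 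Plugging this in completes your argument.
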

\begin{proof}
If $\deg(v_3)=4$, then let $Z$ consist of the two neighbors of $v_3$ of degree three, otherwise let $Z=\emptyset$.
Let $H$ be the subgraph of $G$ induced by $\{v_1,v_2,v_3,v_4,v_3',v'_4\}\cup Z$.
Clearly, $|V(H)|\le 8$, and thus it suffices to prove that $H$ is $(1,4)$-reducible.
Note that $v_3v'_4,v'_3v_4\not\in E(G)$ by (\dag).

Let $\delta=\delta_{G,4}$.  Let us first argue that $H$ satisfies (FIX).
Consider a vertex $u\in V(H)$ and a $(\deg_H+\delta)\downarrow u$-assignment $L$ for $H$.
Let $c$ be a color in $L(u)$ and let $L'$ be the list assignment for $H-u$ obtained from $L$ by removing $c$
from the lists of neighbors of $u$.  We need to argue that $H-u$ is $L'$-colorable.
Note that $|L'(z)|\ge \deg_{H-u}(z)$ for all $z\in V(H-u)$, and $|L'(z)|>\deg_{H-u}(z)$ if $\deg_G(z)=3$.

If $u\neq v_3$ then $H-u$ is connected and contains a vertex $z$ such that $\deg_{H-u}(z)<|L(z)|$, and thus $H-u$ is $L'$-colorable.
On the other hand, each component of $H-v_3$ contains either a vertex whose degree in $G$ is three or a $4$-cycle,
and thus if $u=v_3$, then $H-v_3$ is $L'$-colorable by Lemma~\ref{lemma-gallai}.  Hence, (FIX) holds.

Next, we show that $H$ satisfies (FORB).  Let  $I$ be a $1$-independent set in $H$ of size at most $2$, and let $L$ be a $(\deg_H+\delta-1_I)$-assignment for $H$.
We need to argue that $H$ is $L$-colorable.  Let $H'$ be the induced subgraph of $H$ obtained by the same reduction rules as in the proof
of Lemma~\ref{lemma-mainredu}; it suffices to prove that $H'$ is $L$-colorable.

If $I\cap \{v_1,v_2,v'_3,v'_4\}=\emptyset$, then the 4-cycle $v_1v_2v'_3v'_4$ is removed by the second rule.  Then,
at most one of $v_3$ and $v_4$ belongs to $I$, and thus both get removed by the first rule in turn.  Finally, vertices of $Z$ are
removed by the first rule.  Hence, $H'$ is empty, and thus $L$-colorable.  Therefore, we can assume that $I\cap \{v_1,v_2,v'_3,v'_4\}\neq\emptyset$

If $Z$, $v_3$, and $v_4$ are removed (in any order) during the construction of $H'$, then since at most one of $v_1$ and $v_2$ belongs
to $I$, the vertices $v_1$ and $v_2$ are next removed by the first rule.  Finally, since at most one of $v'_3$ and $v'_4$ belongs to $I$,
the vertices $v'_3$ and $v'_4$ are removed by the first rule.  Hence, $H'$ is empty, and thus $L$-colorable.

Therefore, we can assume that $V(H')\cap (\{v_3,v_4\}\cup Z)\neq \emptyset$, and in particular $I\cap (\{v_3,v_4\}\cup Z)\neq\emptyset$.
Consequently, $|I\cap \{v_1,v_2,v'_3,v'_4\}|=1$ and $|I\cap (\{v_3,v_4\}\cup Z)|=1$.  Furthermore, a straightforward case analysis shows
that either $V(H')\cap (\{v_3,v_4\}\cup Z)=\{v_4\}$ and $v_4\in I$, or $\deg(v_3)=3$ and $v_3\in I$.
In the former case, if $v_2\in I$ then the 4-cycle $v_1v_2v'_3v'_4$ is removed by the second rule; if $v_2\not\in I$, then $v_2$, $v_1$, $v'_3$, and $v'_4$
are removed by the first rule in some order.  In either case $v_4$ is then removed by the first rule, which is a contradiction.

Hence, we can assume $\deg(v_3)=3$ and $v_3\in I$.  Furthermore, since the $4$-cycle $v_1v_2v_3v_4$ is not removed by the second rule,
we conclude $v_1\in I$.  Note that $|L(v_1)|,|L(v_3)|,|L(v_4)|,|L(v'_3)|,|L(v'_4)|\ge 2$ and $|L(v_2)|\ge 3$.
If $L(v_1)\cap L(v_3)\neq\emptyset$, we give $v_1$ and $v_3$ the same color and then greedily $L$-color $v_4$, $v'_4$, $v'_3$, and $v_2$ in order.
Otherwise, for some $i\in \{1,3\}$, there exists a color $c\in L(v_i)$ such that $|L(v_2)\setminus \{c\}|\ge 3$.
Give $v_i$ the color $c$ and then greedily $L$-color $v_4$, $v_{4-i}$, $v'_4$, $v'_3$, and $v_2$ in order.
In both cases, we obtain an $L$-coloring of $H$, implying (FORB).

Hence, both (FIX) and (FORB) hold, and thus $H$ is $(1,4)$-reducible.
\end{proof}

\subsection{Discharging}\label{sec-discharging}

Let $G_0$ be a connected plane triangle-free graph of minimum degree at least three.
Note that $G_0$ has a face of length at most $5$, bounded by a cycle $C_0$.
Without loss of generality, we can assume that $C_0$ bounds the outer face of $G_0$.
Let $C$ be a $(\le\!5)$-cycle in $G_0$ such that the open disk $\Delta_C$ bounded by $C$ is not a face of $G_0$,
and $\Delta_C$ is minimal among the cycles $C$ with this property.  Let $G$ be the subgraph of $G_0$ drawn in the closure of $\Delta_C$.
By the choice of $C$, every $(\le\!5)$-cycle in $G$ bounds a face.  Note also that if $H$ is a $(1,4)$-reducible induced subgraph
of $G$ disjoint from $C$, then $H$ is also a $(1,4)$-reducible induced subgraph of $G_0$.

Let us assign charge $\ch_0(v)=\deg(v) - 4$ to each vertex $v\in V(G)\setminus V(C)$,
charge $\ch_0(v)=\deg(v)-7/3$ to each vertex $v\in V(C)$, charge $\ch_0(f)=|f|-4$ to each face $f$ of $G$ distinct from the outer one,
and $\ch_0(f_0)=0$ to the outer face $f_0$,
where $|f|$ denotes the length of the facial walk of $f$.  By Euler's formula, we have
$\sum_{v\in V(G)} \ch_0(v)+\sum_{f\in F(G)}\ch_0(f)=(2|E(G)|-4|V(G)|)+(2|E(G)|-4|F(G)|)+\tfrac{5}{3}|C|-(|C|-4)=4(|E(G)|-|V(G)|-|F(G)|)+\tfrac{2}{3}|C|+4=-4+\tfrac{2}{3}|C|<0.$
Let $f$ be a face of $G$ and let $W$ be its facial walk in the clockwise order around $f$.  An \emph{angle} of $f$ is a subwalk of $W$ of length two, and the \emph{tip} of the
angle is the central vertex of this subwalk.  Note that $f$ has exactly $|f|$ angles, while it may be incident with a smaller number of vertices if $G$ is not $2$-connected.
A $4$-face $f$ is \emph{poor} if all vertices incident with $f$ have degree at most $4$ and do not belong to $V(C)$,
and one of the incident vertices either has degree three or two neighbors of degree three not belonging to $V(C)$.

Let us now redistribute the charge according to the following rules:
\begin{itemize}
\item[(R0)] For each non-outer face $f$ of $G$ and for each angle of $f$ with tip $v$, if either $v\not\in V(C)$ and $v$ has degree three,
or $v\in V(C)$ and $v$ has degree two, then $f$ sends $1/3$ to $v$.
\item[(R1)] Let $f$ be a non-outer face of $G$ and let $v_1vv_2$ be an angle of $f$ such that $v\not\in V(C)$ and $\deg(v)=4$.  Let $v_3$ and $v_4$ be
vertices incident with $v$ distinct from $v_1$ and $v_2$, and let $g$ be the face with the angle $v_3vv_4$.  The face $f$
sends $1/6$ to $g$ for each such angle with $v_3,v_4\not\in V(C)$ and $\deg(v_3)=\deg(v_4)=3$.
\item[(R2)] For each non-outer face $f$ of $G$ and for each incident edge $uv$ such that $u,v\not\in V(C)$, $\deg(u)=\deg(v)=4$, neither $u$ nor $v$ has
two neighbors of degree three not belonging to $V(C)$, and the other face $g$ incident with $uv$ is poor, the face $f$ sends $1/6$ to $g$.
\end{itemize}
Let $\ch_1$ denote the charge after performing the redistribution according to the rules (R0), (R1), and (R2).

Consider a $4$-face $f$ bounded by a cycle $W=v_1v_2v_3v_4$.  For integers $d_1$, \ldots, $d_4$, we say that $f$ is a $(d_1,d_2,d_3,d_4)$-face if
$v_i\not\in V(C)$ and $\deg(v_i)=d_i$ for $i=1,\ldots, 4$.  We say that $f$ is a $(\ge\!d_1,d_2,d_3,d_4)$-face if $v_2,v_3,v_4\not\in V(C)$,
$\deg(v_1)\ge d_1$ or $v_1\in V(C)$, $\deg(v_2)=d_2$, $\deg(v_3)=d_3$, and
$\deg(v_4)=d_4$; and similarly for other combinations.
We say that a vertex $v\in V(W)$ is \emph{rich} (with respect to the face $f$), if $\deg(v)\ge 5$ or $v\in V(C)$, and some neighbor $u$ of $v$ in $W$
satisfies $u\not\in V(C)$ and $\deg(u)\le 4$.
We say that a $(\ge\!5,3,4,3)$-face is \emph{very light}, and that very light faces as well as
$(\ge\!5,3,\ge\!5,3)$-faces, $(\ge\!5,4,\ge\!5,3)$-faces with the vertex of degree $4$ adjacent to two vertices of degree three not belonging to $V(C)$,
$(\ge\!5,4,4,3)$-faces, and $(\ge\!5,4,3,4)$-faces are \emph{light}.

\begin{lemma}\label{lemma-phase1}
Let $G$ with the outer face bounded by a cycle $C$ be as described at the beginning of Section~\ref{sec-discharging}.
If $G$ does not contain a $(1,4)$-reducible induced subgraph disjoint from $C$ with at most $19$ vertices,
then $\ch_1(v)\ge \max(\ch_0(v),0)$ for all $v\in V(G)$,
$\ch_1(f)\ge 0$ for each face $f$ of $G$ of length at least five, and
each $4$-face $f$ with $n_r$ rich incident vertices satisfies one of the following:
\begin{itemize}
\item $\ch_1(f)=-\frac{1}{2}n_r$ and $f$ is very light; or
\item $-\frac{1}{3}n_r\le \ch_1(f)<-\frac{1}{6}n_r$ and $f$ is light; or
\item $-\frac{1}{6}n_r\le \ch_1(f)<0$ and each rich vertex $v$ incident with $f$ has a $(v,C)$-good neighbor incident with $f$ and using no bud,
and this neighbor is $(v,C)$-excellent unless $f$ is a $(\ge\!5,\ge\!5,4,3)$-face, or
\item $\ch_1(f)\ge 0$.
\end{itemize}
\end{lemma}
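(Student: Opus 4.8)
Throughout, we use that, by hypothesis, $G$ contains none of the $(1,4)$-reducible induced subgraphs identified in Lemmas~\ref{lemma-redusmall}, \ref{lemma-mainredu}, \ref{lemma-5redu}, and \ref{lemma-spec4}: these have at most $2$, $6d-5$ (at a vertex of degree $d$), $10$, and $8$ vertices, so for $d\le 4$ all of them have at most $19$ vertices and are vertex-disjoint from $C$; the range $d\le 4$ is all we shall need. In particular there is no vertex of degree at most $2$ off $C$, no two adjacent degree-$3$ vertices off $C$, and no vertex $v\notin V(C)$ of degree $3$ or $4$ with $\deg(v)-1$ neighbours rooting $v$-stalks disjoint from $C$, no two using the same bud. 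The plan is to prove the three assertions in turn; the common mechanism is that the ``obvious'' outflow a face can incur under (R0), (R1), (R2) is all that occurs, since any further transfer would expose one of these configurations. For vertices this is immediate: no rule decreases a vertex's charge, so $\ch_1(v)\ge\ch_0(v)$, and the only vertices with $\ch_0(v)<0$ are the degree-$3$ vertices off $C$ ($\ch_0=-1$) and the degree-$2$ vertices of $C$ ($\ch_0=-\tfrac{1}{3}$); the former is the tip of three angles of non-outer faces (it is not on the outer face, bounded by $C$) and so receives $3\cdot\tfrac{1}{3}=1$ under (R0), and the latter is the tip of exactly one such angle and receives $\tfrac{1}{3}$, both reaching $0$.

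For a face $f$ with $|f|\ge 5$ we have $\ch_0(f)=|f|-4\ge 1$ and $f$ only loses charge, so it suffices to bound its outflow by $|f|-4$. Each angle sends at most $\tfrac{1}{3}$ under (R0) (only if its tip is degree $3$ off $C$, or degree $2$ on $C$) or at most $\tfrac{1}{6}$ under (R1) (only if its tip is degree $4$ off $C$), and each incident edge sends at most $\tfrac{1}{6}$ under (R2). The key point is that (R1) cannot fire through a degree-$4$ vertex $u$ on the facial walk adjacent there to a degree-$3$ vertex off $C$: such $u$ would have three neighbours of degree $3$ off $C$, each rooting an $(a)$-stalk with no bud, and Lemma~\ref{lemma-mainredu} with $d=4$ would apply. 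Together with Lemma~\ref{lemma-redusmall} (no two consecutive degree-$3$ vertices off $C$), a short computation bounds the outflow by $|f|-4$ for all $|f|\ge 5$, the case $|f|=5$ requiring the extra observation that every degree-$4$ vertex of a $5$-cycle carrying two degree-$3$ vertices off $C$ is adjacent to one of them; faces incident with $C$ are treated by an analogous count, using that $|C|\le 5$ and that a vertex of $C$ of degree at least $3$ never receives under (R0) nor conducts under (R1).

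The bulk of the proof is the $4$-faces. Let $f=v_1v_2v_3v_4$ be a $4$-face, which is bounded by a $4$-cycle since $G$ is triangle-free with minimum degree $3$ off $C$; as $\ch_0(f)=0$, $\ch_1(f)$ is the net flow into $f$. I would run a case analysis on the degrees of $v_1,\dots,v_4$ and on which of them lie on $C$ --- finitely many types, exactly the data that ``poor'', ``rich'', ``very light'', and ``light'' encode. In each type the outflow is $\tfrac{1}{3}$ to each incident degree-$3$ vertex off $C$ (R0), $\tfrac{1}{6}$ through each degree-$4$ vertex off $C$ whose two neighbours off $f$ have degree $3$ off $C$ (R1), and $\tfrac{1}{6}$ across each incident edge with both ends of degree $4$ off $C$, neither having two degree-$3$ neighbours off $C$, whose opposite face is poor (R2); the inflow is $\tfrac{1}{6}$ through each degree-$4$ vertex off $C$ whose two $f$-neighbours have degree $3$ off $C$ (R1), plus, if $f$ is poor, $\tfrac{1}{6}$ per incident qualifying edge (R2). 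For the types with no rich vertex (all $v_i$ of degree at most $4$ off $C$, hence by Lemma~\ref{lemma-redusmall} a $(4,4,4,4)$-, $(3,4,4,4)$-, or $(3,4,3,4)$-face) I would show $\ch_1(f)\ge 0$: an (R1)-outflow would give a degree-$4$ vertex with three degree-$3$ neighbours off $C$, and an (R2)-outflow would produce adjacent $4$-faces matching Lemma~\ref{lemma-spec4}, both impossible, while the remaining (R2)-flows cancel. For a type with a rich vertex $v$, its (at most two) $f$-neighbours of degree at most $4$ off $C$ turn out to be the only additional sink, which gives the stated bounds ($-\tfrac{1}{2}n_r$ for a very light $(\ge\!5,3,4,3)$-face, between $-\tfrac{1}{3}n_r$ and $-\tfrac{1}{6}n_r$ for the other light types, at least $-\tfrac{1}{6}n_r$ otherwise); moreover each rich $v$ has a $(v,C)$-good $f$-neighbour, which I would exhibit as a $v$-stalk (a degree-$3$ $f$-neighbour roots an $(a)$-stalk, a degree-$4$ $f$-neighbour with a pendant degree-$3$ vertex roots a $(b)$-stalk, the $4$-cycle $f$ itself can play the role of a $(c)$-stalk, and so on), this stalk being extendable to an excellent one except exactly when $f$ is a $(\ge\!5,\ge\!5,4,3)$-face.

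The hard part is this last analysis. It is delicate because the (R1) and (R2) transfers depend on the degrees of \emph{second} neighbours of the $v_i$ and on the poorness of adjacent $4$-faces, and ruling out each spurious outflow requires pinpointing, in the configuration at hand, the particular vertex --- often a degree-$4$ vertex of $f$ or of an adjacent poor $4$-face --- at which three $(\cdot,C)$-good neighbours with no two using the same bud become visible, so that Lemma~\ref{lemma-mainredu} (with $d\le 4$) applies; an (R2)-outflow is the most delicate, since its target is a poor, hence a $4$-, face glued to $f$, which brings in Lemma~\ref{lemma-spec4} (and Lemma~\ref{lemma-5redu} near degree-$5$ vertices). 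Equally, extracting the claimed structural conclusion from a residual negative charge forces one to match the local picture at each rich vertex to one of the six stalk types and, where needed, to its excellent extension, while respecting the ``no two stalks share a bud'' condition and the solitary exception of the $(\ge\!5,\ge\!5,4,3)$-face, for which only goodness, not excellence, is asserted.
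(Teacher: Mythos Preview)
Your outline matches the paper's approach exactly: vertices are trivial, $(\ge\!5)$-faces are handled by bounding the total (R0)/(R1)/(R2) outflow, and $4$-faces are split by the degree pattern on their boundary, with Lemmas~\ref{lemma-redusmall}, \ref{lemma-mainredu}, \ref{lemma-spec4} ruling out excess outflow. But what you have written is a plan rather than a proof. The $4$-face analysis---which is the bulk of the lemma---is announced (``I would run a case analysis'', ``I would show'', ``I would exhibit'') but not carried out, and several of your heuristic summaries are not quite right as stated.

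For instance: the $(3,4,3,4)$-face you list cannot occur, since at its degree-$3$ vertex $v_1$ both $v_2$ and $v_4$ root $(b)$-stalks through $v_3$, and Lemma~\ref{lemma-mainredu} with $d=3$ already applies. An (R1)-outflow does not always exhibit a degree-$4$ vertex with three degree-$3$ neighbours; at some positions only two arise, and one must supplement with a $(b)$- or $(c)$-stalk through the $4$-cycle bounding $f$ to reach three good neighbours. For poor faces the paper rules out (R2)-outflow via Lemma~\ref{lemma-mainredu}, not Lemma~\ref{lemma-spec4} (the latter is used only for the non-poor $(4,4,4,4)$ case), and there is no ``cancellation'': one must identify two specific edges of the poor face across which it \emph{receives} $\tfrac16$ each, and this in turn requires first showing that none of the remaining boundary vertices has a second degree-$3$ neighbour off $C$. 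Lemma~\ref{lemma-5redu} plays no role in this lemma. For $(\ge\!5)$-faces the paper organises the bound via sets $A_0,A'_0,A_1,E_2$ with the inequalities $|A_0|+|A_1|+|E_2|\le|f|$ and $|A'_0|\le\lfloor|f|/2\rfloor$, and treats faces meeting a degree-$2$ vertex of $C$ separately (using that such a face has at least two angles with tips in $C$ of degree $\ge 3$, not that $|C|\le 5$); your ``short computation'' and ``analogous count'' do not supply this.
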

\begin{proof}
Consider a vertex $v\in V(G)$.  If $v\not\in V(C)$, then since the minimum degree of $G_0$ is at least three,
we have $\deg(v)\ge 3$.  Note that $v$ sends no charge.  If $\deg(v)=3$, then
$v$ receives $3\times \frac{1}{3}$ from incident faces by (R0), and thus
$\ch_1(v)=\ch_0(v)+1=0$.  If $\deg(v)\ge 4$, then $\ch_1(v)=\ch_0(v)=\deg(v)-4\ge 0$.
If $v\in V(C)$, then $\deg(v)\ge 2$ since $C$ is a cycle.  If $\deg(v)=2$, then
$v$ receives $1/3$ by (R0) and $\ch_1(v)=\ch_0(v)+1/3=0$.  If $\deg(v)\ge 3$,
then $\ch_1(v)=\ch_0(v)=\deg(v)-7/3>0$.

Consider now a face $f$.  If $f$ is the outer face, then $\ch_1(f)=\ch_0(f)=0$.
Hence, we can assume that $f$ is not the outer face.
Let $A_0$ denote the set of angles of $f$ for that (R0) applies,
let $A'_0$ denote the angles in $A_0$ whose tips do not belong to $V(C)$,
let $A_1$ denote the set of angles of $f$ for that (R1) applies,
and let $E_2$ denote the set of edges of $f$ for that (R2) applies.
Note the following:
\begin{itemize}
\item $A_0\cap A_1=\emptyset$ and the edges of $E_2$ are not incident with the tips of angles of $A_0\cup A_1$.
\item The tips of angles of $A'_0$ form an independent set (by Lemma~\ref{lemma-redusmall}).
\item If $u$ is the tip of an angle $a_u\in A'_0$ and $v$ is the tip of an angle $a_v$ consecutive to $a_u$ on $f$,
then $a_v\not\in A_1$ (as otherwise $v$ would be incident with three $(v,C)$-good vertices with stalks satisfying (a),
and thus $G$ would contain a $(1,4)$-reducible induced subgraph disjoint from $C$ with at most $19$ vertices by Lemma~\ref{lemma-mainredu}).
\end{itemize}
Consequently, $|A_0|+|A_1|+|E_2|\le |f|$, $|A'_0|\le \lfloor |f|/2\rfloor$,
 and if $|A_0|+|A_1|+|E_2|=|f|$, then $A'_0=\emptyset$.

Let us first consider the case that $A_0\neq A'_0$, i.e., some vertex in $V(C)$ of degree two is incident with $f$.
Since $G\neq C$ and $G$ is connected, it follows that at least two angles of $f$ have a tip in $V(C)$ of degree at least three,
and consequently $|A_0|+|A_1|+|E_2|\le |f|-2$.  Since $G$ is triangle-free and all $(\le\!5)$-cycles in $G$ bound faces, if $|f|=4$, then
$f$ would be incident with a vertex not in $V(C)$ of degree two, contradicting the assumption that $G_0$ has minimum degree at least three.
Hence, $|f|\ge 5$, and
\begin{align*}
\ch_1(f)&=\ch_0(f)-|A_0|/3-|A_1|/6-|E_2|/6\\
&\ge |f|-4-(|f|-2)/3\\
&=\frac{2|f|-10}{3}\ge 0.
\end{align*}
Hence, we can assume that $A_0=A'_0$, and in particular $|A_0|\le \lfloor |f|/2\rfloor$ and either $|A_0|=0$ or $|A_0|+|A_1|+|E_2|\le |f|-1$.  Consequently,
\begin{align*}
\ch_1(f)&=\ch_0(f)-|A_0|/3-|A_1|/6-|E_2|/6\\
&\ge \ch_0(f)-(|f|-1)/6-\lfloor |f|/2\rfloor/6\\
&=\frac{5|f|-\lfloor |f|/2\rfloor-23}{6}.
\end{align*}
If $|f|\ge 5$, this implies $\ch_1(f)\ge 0$.

Hence, assume $|f|=4$.  Let $v_1v_2v_3v_4$ be the cycle bounding $f$.
Suppose first that $f$ is poor.  Note that $f$ is incident with at most one vertex of degree three by Lemma~\ref{lemma-mainredu},
since if say $\deg(v_1)=3$ and $\deg(v_i)=3$ for some $i\in \{2,3,4\}$, then $v_1$ has two $(v_1,C)$-good neighbors $v_2$ and $v_4$,
at most one of them using a bud.  Hence, we can assume that $\deg(v_2)=\deg(v_3)=\deg(v_4)=4$ and
either $\deg(v_1)=3$, or $\deg(v_1)=4$ and $v_1$ has two neighbors of degree three not belonging to $V(C)$.
If for some $i\in\{2,3,4\}$, the vertex $v_i$ had a neighbor $y\neq v_1$ of degree three not belonging to $V(C)$,
then either $v_i$ would have three $(v_i,C)$-good neighbors, at most one of them using a bud,
or (when $\deg(v_1)=4$ and $i\neq 3$) $v_1$ would have three $(v_1,C)$-good neighbors not using buds;
and then Lemma~\ref{lemma-mainredu} would contradict the assumption that $G$ does not contain a $(1,4)$-reducible induced subgraph disjoint from $C$ with at most $19$ vertices.
Hence, none of the vertices $v_2$, $v_3$, and $v_4$ has a neighbor of degree three not belonging to $V(C)$,
and thus $|A_0|+|A_1|=1$.  Similarly, Lemma~\ref{lemma-mainredu} implies that $|E_2|=0$.
Consequently, $f$ sends at most $1/3$ by (R0) and (R1), and receives $2\times \frac{1}{6}$ by (R2) over edges $v_2v_3$
and $v_3v_4$, ensuring that $\ch_1(f)\ge \ch_0(f)=0$.

Therefore, suppose that $f$ is not poor.  Let us now distinguish cases depending on $|A_0|$.
If $|A_0|=2$, then by Lemmas~\ref{lemma-redusmall} and \ref{lemma-mainredu}, $f$ is either a $(\ge\!5,3,4,3)$-face or a $(\ge\!5,3,\ge\!5,3)$-face.
In the former case, $f$ is very light, sends $2\times \frac{1}{3}$ by (R0) and receives $1/6$ by (R1),
and $\ch_1(f)=-\frac{1}{2}=-\frac{1}{2}n_r$.  In the latter case, $f$ is light and sends $2\times \frac{1}{3}$ by (R0),
and thus $\ch_1(f)=-\frac{2}{3}=-\frac{1}{3}n_r$.

If $|A_0|=1$, then since $f$ is not poor, we have the following possibilities:
\begin{itemize}
\item $f$ is a $(\ge\!5,\ge\!5,\ge\!5,3)$-face: $n_r=2$, and $\ch_1(f)=-\frac{1}{3}=-\frac{1}{6}n_r$, and $v_4$ is a $(v_i,C)$-excellent neighbor
of $v_i$ for $i\in\{1,3\}$.
\item $f$ is a $(\ge\!5,4,\ge\!5,3)$-face: If the vertex of degree $4$ is adjacent to two vertices of degree three not belonging to $V(C)$,
then $|A_1|=1$, $f$ is light, $n_r=2$, and $\ch_1(f)=-\frac{1}{3}-\frac{1}{6}>-\frac{1}{3}n_r$.  Otherwise
$\ch_1(f)=-\frac{1}{3}=-\frac{1}{6}n_r$ and $v_4$ is a $(v_i,C)$-excellent neighbor of $v_i$ for $i\in\{1,3\}$.
\item $f$ is a $(\ge\!5,\ge\!5,4,3)$-face: We have $A_1=\emptyset$ by Lemma~\ref{lemma-mainredu}, $n_r=2$, and $\ch_1(f)=-\frac{1}{3}=-\frac{1}{6}n_r$,
$v_3$ is a $(v_2,C)$-good neighbor of $v_2$, and $v_4$ is a $(v_1,C)$-good neighbor of $v_1$.
\item $f$ is a $(\ge\!5,4,4,3)$-face or a $(\ge\!5,4,3,4)$-face: We have $A_1=E_2=\emptyset$ by Lemma~\ref{lemma-mainredu},
$n_r=1$, $f$ is light, and $\ch_1(f)=-\frac{1}{3}=-\frac{1}{3}n_r$.
\end{itemize}

Finally, if $A_0=\emptyset$, we have the following possibilities.
\begin{itemize}
\item $f$ is a $(4,4,4,4)$-face: Since $f$ is not poor, we have $A_1=\emptyset$.  By Lemma~\ref{lemma-spec4}, we have $E_2=\emptyset$.
Consequently, $\ch_1(f)=0$.
\item $f$ is a $(\ge\!5,4,4,4)$-face: By Lemma~\ref{lemma-mainredu}, we have $|A_1|+|E_2|\le 1$, and thus $\ch_1(f)\ge -\frac{1}{6}=-\frac{1}{6}n_r$.
Furthermore, either $\ch_1(f)=0$, or $|A_1|+|E_2|=1$ and $v_2$ or $v_4$ is a $(v_1,C)$-excellent neighbor of the rich vertex $v_1$.
\item $f$ is a $(\ge\!5,\ge\!5,4,4)$-face: Note that $|A_1|+|E_2|\le 2$, and thus $\ch_1(f)\ge -2\cdot\frac{1}{6}=-\frac{1}{6}n_r$.
Furthermore, either $\ch_1(f)=0$, or $|A_1|+|E_2|\ge 1$ and
$v_3$ is a $(v_2,C)$-excellent neighbor of $v_2$, and $v_4$ is a $(v_1,C)$-excellent neighbor of $v_1$.
\item $f$ is a $(\ge\!5,4,\ge\!5,4)$-face: Then $\ch_1(f)\ge -2\cdot\frac{1}{6}=-\frac{1}{6}n_r$,
and either $\ch_1(f)=0$, or $A_1$ contains a $(v_i,C)$-excellent neighbor of $v_i$ for $i\in\{1,3\}$.
\item $f$ is a $(\ge\!5,\ge\!5,\ge\!5,4)$-face: Then $\ch_1(f)\ge -\frac{1}{6}>-\frac{1}{6}n_r$, and either $\ch_1(f)=0$, or $v_4\in A_1$ and $v_4$
is a $(v_i,C)$-excellent neighbor of $v_i$ for $i\in\{1,3\}$.
\item $f$ is a $(\ge\!5,\ge\!5,\ge\!5,\ge\!5)$-face: Then $\ch_1(f)=0$.
\end{itemize}

\end{proof}

Now, we do one more redistribution of the charge, according to the following rule:
\begin{itemize}
\item[(R3)] Let $f$ be a $4$-face with $n_r$ incident rich vertices.  If $v$ is a rich vertex incident with $f$, then
$v$ sends $-\ch_1(f)/n_r$ to $f$.
\end{itemize}
Let $\ch_2$ denote the charge after performing the redistribution according to all the rules (R0)---(R3).
Lemma~\ref{lemma-phase1} and the rule (R3) imply that all faces as well as vertices not in $V(C)$ of degree at most $4$ have non-negative charge.
We now need to argue about vertices of degree at least $5$.

\begin{lemma}\label{lemma-fiveopplight}
Let $G$ with the outer face bounded by a cycle $C$ be as described at the beginning of Section~\ref{sec-discharging}.
Let $v\not\in V(C)$ be a vertex of degree $5$, incident with angles of faces $f_1$, \ldots, $f_5$
in order.  If $G$ does not contain a $(1,4)$-reducible induced subgraph disjoint from $C$ with at most $25$ vertices,
then at most one of faces $f_1$ and $f_3$ is a light $4$-face.
\end{lemma}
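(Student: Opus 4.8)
The plan is to argue by contradiction: assume that both $f_1$ and $f_3$ are light $4$-faces, and then produce a $(1,4)$-reducible induced subgraph of $G$, disjoint from $C$, with at most $25=6\cdot 5-5$ vertices, contradicting the hypothesis. Let $u_1,\ldots,u_5$ be the neighbors of $v$ in the cyclic order in which the edges $vu_i$ appear around $v$, labelled so that $f_i$ is incident with the angle $u_ivu_{i+1}$ (indices modulo $5$). Then the two neighbors of $v$ on the boundary cycle of $f_1$ are $u_1$ and $u_2$, those on the boundary cycle of $f_3$ are $u_3$ and $u_4$, and since $v$ has degree $5$ these four vertices are pairwise distinct. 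Because $\deg(v)=5$ and a light $4$-face has degrees $3$ or $4$ at every position not marked ``$\ge 5$ or $\in V(C)$'', the vertex $v$ occupies a ``$\ge 5$''-position in any light $4$-face incident with it.

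The key claim is: \emph{for each light $4$-face $f$ incident with $v$, both neighbors of $v$ on the boundary of $f$ are $(v,C)$-good, and whenever the witnessing $v$-stalk has a bud, that bud is one of those two neighbors of $v$ on $f$.} This is checked by going through the five types of light $4$-faces. If a neighbor $x$ of $v$ on $f$ has degree $3$, then $vx$ is a $v$-stalk of type (a) lying on $f$, hence disjoint from $C$; this covers both $v$-neighbors of the very light $(\ge\!5,3,4,3)$-faces and of the $(\ge\!5,3,\ge\!5,3)$-faces, and one $v$-neighbor in the remaining cases. If $f$ is a $(\ge\!5,4,\ge\!5,3)$-face with the extra property, its degree-$4$ vertex is a neighbor of $v$ and (by that property) has a degree-$3$ neighbor $z\notin V(C)$ different from $v$ and from the diagonal vertex of $f$, giving a $v$-stalk of type (b) disjoint from $C$. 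If $f$ is a $(\ge\!5,4,3,4)$-face, each of its two degree-$4$ $v$-neighbors is adjacent to the degree-$3$ vertex of $f$, yielding a $v$-stalk of type (b); these two stalks share that degree-$3$ vertex, but vertex-disjointness of stalks is not required by Lemma~\ref{lemma-mainredu}. Finally, if $f$ is a $(\ge\!5,4,4,3)$-face, the $4$-cycle bounding $f$ is a $v$-stalk of type (c) rooted at the degree-$4$ $v$-neighbor on $f$, with bud the degree-$3$ $v$-neighbor on $f$. In all cases both $v$-neighbors on $f$ are $(v,C)$-good, and the only bud that can occur lies on $f$.

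Applying this claim to $f_1$ and to $f_3$, the vertices $u_1,u_2,u_3,u_4$ are four $(v,C)$-good neighbors of $v$; any bud of a stalk chosen for $u_1$ or $u_2$ lies in $\{u_1,u_2\}$ and any bud of a stalk chosen for $u_3$ or $u_4$ lies in $\{u_3,u_4\}$, so no two of these stalks use the same bud. Since $\deg(v)=5$, Lemma~\ref{lemma-mainredu} then produces a $(1,4)$-reducible induced subgraph of $G$, vertex-disjoint from $C$, with at most $6\cdot 5-5=25$ vertices, contradicting the hypothesis. Hence at most one of $f_1$ and $f_3$ is a light $4$-face.

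The only part requiring care is the case analysis in the key claim — in particular confirming that the $(\ge\!5,3,4,3)$ and $(\ge\!5,3,\ge\!5,3)$ faces genuinely contribute \emph{two} good neighbors each, and that the type-(b) stalks built in the $(\ge\!5,4,\ge\!5,3)$ and $(\ge\!5,4,3,4)$ cases stay off $C$. The one hypothesis of Lemma~\ref{lemma-mainredu} that might seem delicate, namely that no two good neighbors use a common bud, is free here: the only bud-producing configuration is the $(\ge\!5,4,4,3)$-face, whose bud lies on that face, and $f_1$ and $f_3$ share no edge at $v$, so their boundary $v$-neighbors are disjoint.
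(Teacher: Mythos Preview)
Your proof is correct and follows the same approach as the paper's own proof: argue by contradiction, verify by inspecting the definition of a light face that both neighbors of $v$ on each light face are $(v,C)$-good with any buds confined to that face, and then apply Lemma~\ref{lemma-mainredu} with the four distinct neighbors $u_1,u_2,u_3,u_4$. The paper's proof is a two-line appeal to this inspection, whereas you have spelled out the five cases explicitly; your additional detail is accurate.
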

\begin{proof}
Let $v_1vv_2$ and $v_3vv_4$ be the angles of $f_1$ and $f_3$ incident with $v$.  If both $f_1$ and $f_3$ were light, then
the inspection of the definition of a light face shows that $v_1$, \ldots, $v_4$ would be $(v,C)$-good vertices,
no two of them using the same bud, and thus $G$ would contain a $(1,4)$-reducible induced subgraph
disjoint from $C$ with at most $25$ vertices by Lemma~\ref{lemma-mainredu}, in contradiction to the assumptions.
\end{proof}

\begin{lemma}\label{lemma-fiveoppgood}
Let $G$ with the outer face bounded by a cycle $C$ be as described at the beginning of Section~\ref{sec-discharging}.
Let $v\not\in V(C)$ be a vertex of degree $5$, adjacent to vertices $v_1$, \ldots, $v_5$ in order.
Let $f_1$, $f_3$, and $f_4$ be the faces with angles $v_1vv_2$, $v_3vv_4$, and $v_4vv_5$, respectively.
If $G$ does not contain a $(1,4)$-reducible induced subgraph disjoint from $C$ with at most $25$ vertices, $f_1$ is a light $4$-face, and $v$ sends a positive amount of charge
to both $f_3$ and $f_4$ by the rule (R3), then $v_4$ is $(v,C)$-excellent.
\end{lemma}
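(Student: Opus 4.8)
The plan is to exploit that $f_1$ is a light $4$-face to pin down $(v,C)$-good neighbours of $v$, to read off from Lemma~\ref{lemma-phase1} what the faces $f_3,f_4$ (to which $v$ sends charge) look like, and to use Lemma~\ref{lemma-mainredu} as a counting tool: since $6\cdot 5-5=25$, if $v$ has four $(v,C)$-good neighbours with pairwise distinct buds we obtain a forbidden reducible subgraph. Concretely, inspecting the definition of a light $4$-face exactly as in the proof of Lemma~\ref{lemma-fiveopplight} shows that $v_1$ and $v_2$ are both $(v,C)$-good, witnessed by $v$-stalks vertex-disjoint from $C$ of which at most one carries a bud; such a bud, if present, is the degree-$3$ vertex of $f_1$ adjacent to $v$, hence lies in $\{v_1,v_2\}$ and differs from $v_3,v_4,v_5$. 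Since $v$ sends a positive amount of charge to $f_3$ by (R3), $v$ is rich with respect to $f_3$ and $\ch_1(f_3)<0$, so $f_3$ is a $4$-face by Lemma~\ref{lemma-phase1}, and it is not light by Lemma~\ref{lemma-fiveopplight} applied to $f_1,f_3$; applying that lemma to the pair $f_4,f_1$ (also two faces apart around $v$) shows $f_4$ is likewise a $4$-face that is not light. Hence Lemma~\ref{lemma-phase1} places both $f_3$ and $f_4$ in the case $-\tfrac16 n_r\le\ch_1<0$, so $v$ has a $(v,C)$-good neighbour $x_3\in\{v_3,v_4\}$ incident with $f_3$ and using no bud, which is $(v,C)$-excellent unless $f_3$ is a $(\ge\!5,\ge\!5,4,3)$-face, and symmetrically a neighbour $x_4\in\{v_4,v_5\}$ for $f_4$.

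Next I show $v_4$ is $(v,C)$-good. If not, then $x_3=v_3$ and $x_4=v_5$, so $v_1,v_2,v_3,v_5$ are four distinct $(v,C)$-good neighbours of $v$; as $v_3,v_5$ use no bud and the at most one bud among the stalks of $v_1,v_2$ lies in $\{v_1,v_2\}$, no two of these four use the same bud, and Lemma~\ref{lemma-mainredu} produces a $(1,4)$-reducible induced subgraph disjoint from $C$ with at most $25$ vertices, contradicting the hypothesis. Thus $v_4\in\{x_3,x_4\}$. If $v_4=x_3$ and $f_3$ is not a $(\ge\!5,\ge\!5,4,3)$-face, or $v_4=x_4$ and $f_4$ is not a $(\ge\!5,\ge\!5,4,3)$-face, then $v_4$ is $(v,C)$-excellent by Lemma~\ref{lemma-phase1} and we are done.

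In the remaining case I argue by contradiction, assuming $v_4$ is not $(v,C)$-excellent. By the symmetry $f_3\leftrightarrow f_4$, $v_3\leftrightarrow v_5$ I may assume $x_3=v_4$ (if instead $x_3=v_3$ and $x_4=v_4$, then $v_1,v_2,v_3,v_4$ are four $(v,C)$-good neighbours with distinct buds, again contradicting Lemma~\ref{lemma-mainredu}); then $f_3$ is a $(\ge\!5,\ge\!5,4,3)$-face, with bounding $4$-cycle $vv_4av_3$, say. As $v_4$ is a stalk root we have $v_4\notin V(C)$ and $\deg(v_4)\in\{3,4\}$; if $\deg(v_4)=3$ then $vv_4$ is a type-(a) $v$-stalk and $v_4$ is $(v,C)$-excellent, so $\deg(v_4)=4$. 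Since $\deg(v_4)=4$ and $v_4\notin V(C)$, the two ``$\ge\!5$-or-on-$C$'' vertices of $f_3$ must be $v$ and $v_3$, forcing $\deg(a)=3$ and $a\notin V(C)$. Now if $x_4=v_5$ then $v_1,v_2,v_4,v_5$ are four good neighbours with distinct buds, a contradiction; hence $x_4=v_4$, so $f_4$ is a $(\ge\!5,\ge\!5,4,3)$-face, and writing its $4$-cycle as $vv_4bv_5$ the same reasoning gives $\deg(b)=3$ and $b\notin V(C)$. Finally $a\ne b$: otherwise $f_3$ and $f_4$ would both contain the path $vv_4a$ on their boundaries, so $vv_4$ and $v_4a$ would be the only edges at $v_4$, contradicting $\deg(v_4)=4$. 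Therefore $v_4$ has two distinct degree-$3$ neighbours $a,b\notin V(C)$, so $vv_4a$ is a type-(b) $v$-stalk disjoint from $C$ whose root $v_4$ has a further degree-$3$ neighbour $b\ne a$ outside $C$; by definition $v_4$ is $(v,C)$-excellent, the desired contradiction.

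I expect the last paragraph to be the crux: outside the $(\ge\!5,\ge\!5,4,3)$ case Lemma~\ref{lemma-phase1} delivers excellence for free, so the real work is to show that in that exceptional case the local picture around $v_4$ --- forced jointly by $f_3$, by $f_4$, and by the planar fact that the two $4$-faces at the edge $vv_4$ cannot share a second edge at $v_4$ --- supplies one of the three extra conditions in the definition of a $(v,C)$-excellent neighbour. The one routine-but-fiddly point elsewhere is tracking buds carefully so that the ``no two using the same bud'' hypothesis of Lemma~\ref{lemma-mainredu} is met each time it is used.
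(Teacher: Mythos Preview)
Your argument is correct and follows the same route as the paper: use Lemma~\ref{lemma-fiveopplight} and the third case of Lemma~\ref{lemma-phase1} to force the good neighbour on each of $f_3,f_4$ to be $v_4$ via Lemma~\ref{lemma-mainredu}, and in the residual case where both faces are $(\ge\!5,\ge\!5,4,3)$-faces exhibit two distinct degree-$3$ neighbours of $v_4$ outside $C$ (you spell out the $a\ne b$ step that the paper leaves implicit). One cosmetic remark: the sentence ``Thus $v_4\in\{x_3,x_4\}$'' does not follow from ``$v_4$ is $(v,C)$-good'' as you have framed it---what your contradiction actually established is $(x_3,x_4)\ne(v_3,v_5)$, which is precisely $v_4\in\{x_3,x_4\}$; the paper sidesteps this by directly showing that neither $v_3$ nor $v_5$ can be $(v,C)$-good without a bud (each would combine with $v_1,v_2,v_4$ to give four good neighbours with distinct buds), after which $x_3=x_4=v_4$ is immediate.
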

\begin{proof}
By Lemma~\ref{lemma-fiveopplight}, neither $f_3$ nor $f_4$ is light.  Since $v$ sends a positive amount of charge to both $f_3$ and $f_4$ by the rule (R3),
Lemma~\ref{lemma-phase1} implies that both $f_3$ and $f_4$ are incident with a $(v,C)$-good vertex using no bud.
Since $f_1$ is light, both $v_1$ and $v_2$ are $(v,C)$-good vertices, at most one of which uses a bud.
By Lemma~\ref{lemma-mainredu}, we conclude that at most one of $v_3$ and $v_5$ is a $(v,C)$-good vertex using no bud.
Hence, $v_4$ is a $(v,C)$-good vertex using no bud, and neither $v_3$ nor $v_5$ is a $(v,C)$-good vertex using no bud.
Also, by Lemma~\ref{lemma-phase1}, $v_4$ is $(v,C)$-excellent
unless both $f_3$ and $f_4$ are $(\ge\!5,\ge\!5,4,3)$-faces.
But then $v_4$ is also $(v,C)$-excellent, since it is adjacent to two vertices of degree three not belonging to $C$.
\end{proof}

\begin{corollary}\label{cor-charge5}
Let $G$ with the outer face bounded by a cycle $C$ be as described at the beginning of Section~\ref{sec-discharging}.
Let $v\not\in V(C)$ be a vertex of degree $5$.  If $G$ does not contain a $(1,4)$-reducible induced subgraph disjoint from $C$
with at most $25$ vertices, then $\ch_2(v)\ge 0$.
\end{corollary}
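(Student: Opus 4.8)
The plan is to show that $v$ loses at most $1$ unit of charge through rule (R3); since $\ch_1(v)=\ch_0(v)=1$ (as computed in the proof of Lemma~\ref{lemma-phase1}), this yields $\ch_2(v)\ge 0$. Let $f_1,\dots,f_5$ be the faces incident with the angles at $v$ in cyclic order, with $f_i$ having the angle $v_ivv_{i+1}$ (indices modulo $5$), where $v_1,\dots,v_5$ are the neighbors of $v$ in order. Only $4$-faces receive charge, and by Lemma~\ref{lemma-phase1} the amount $v$ sends to such a face $f$ at which it is rich is $\tfrac12$ if $f$ is very light (then $v$ is the unique rich vertex), at most $\tfrac13$ if $f$ is light but not very light, and at most $\tfrac16$ otherwise; in particular $v$ sends more than $\tfrac16$ to $f$ only if $f$ is light. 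Throughout I would use that $G$ contains no $(1,4)$-reducible induced subgraph disjoint from $C$ with at most $25$ vertices, which suffices to invoke Lemmas~\ref{lemma-mainredu} (with $d=5$, so the bound is $6d-5=25$), \ref{lemma-5redu}, \ref{lemma-phase1}, \ref{lemma-fiveopplight}, and \ref{lemma-fiveoppgood}.

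First suppose $v$ is incident with a very light face, say $f_1$ is a $(\ge\!5,3,4,3)$-face $vv_1wv_2$ with $\deg v_1=\deg v_2=3$. Lemma~\ref{lemma-5redu} then forbids $v$ from having a $(v,C)$-excellent neighbor outside $\{v_1,v_2\}$; in particular $v_4$ is not $(v,C)$-excellent. Running through the short list of light face types, I would check that none of $f_2,\dots,f_5$ can be light. For $f_3$ and $f_4$ the two neighbors of $v$ incident with such a face lie outside $\{v_1,v_2\}$, and every light type forces one of them to be a degree-$3$ vertex, or a degree-$4$ vertex rooting a stalk that makes it $(v,C)$-excellent, or else produces two further $(v,C)$-good neighbors carrying no buds — and then $v_1,v_2$ together with these give, via Lemma~\ref{lemma-mainredu}, four $(v,C)$-good neighbors of $v$ with no two using the same bud; each outcome is a contradiction. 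For $f_2$ and $f_5$ the same reasoning applies, since although one neighbor of $v$ may lie in $\{v_1,v_2\}$, in each light type the \emph{other} neighbor ($v_3$, resp.\ $v_5$) is then forced to have degree three, again excluded. Hence $v$ sends at most $\tfrac16$ to each of $f_2,\dots,f_5$. Finally, because $v_4$ is not $(v,C)$-excellent, Lemma~\ref{lemma-fiveoppgood} (with $f_1$ in the role of the light face) shows $v$ cannot send positive charge to both $f_3$ and $f_4$, so one of them receives $0$; the total is at most $\tfrac12+\tfrac16+\tfrac16+\tfrac16+0=1$.

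Now suppose $v$ is incident with no very light face. By Lemma~\ref{lemma-fiveopplight} at most two of $f_1,\dots,f_5$ are light, and if two are light they are consecutive. If none is light the total is at most $5\cdot\tfrac16<1$. If exactly one, say $f_1$, is light, the total is at most $\tfrac13+4\cdot\tfrac16=1$. If $f_1$ and $f_2$ are light (each sending at most $\tfrac13$), I would show $v$ sends $0$ to at least one of $f_3,f_4,f_5$, so that the total is at most $\tfrac13+\tfrac13+\tfrac16+\tfrac16+0=1$. Indeed, if $v$ sent positive charge to all three, then Lemma~\ref{lemma-fiveoppgood} applied with $f_1$ as the light face would make $v_4$ $(v,C)$-excellent, and applied with $f_2$ as the light face (after a cyclic relabelling) would make $v_5$ $(v,C)$-excellent; excellent vertices are $(v,C)$-good via stalks with no bud, and (as one checks from the definition of a light face) the two neighbors of $v$ on a light face are $(v,C)$-good with at most one using a bud, so among $v_1,v_2,v_3$ one can choose two whose stalks carry at most one bud between them, which together with $v_4,v_5$ gives four $(v,C)$-good neighbors of $v$ with no two using the same bud — contradicting Lemma~\ref{lemma-mainredu}.

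In every case $v$ sends at most $1$, so $\ch_2(v)\ge 0$. The main obstacle is the very-light-face case: showing that no other face incident with $v$ is light requires a careful pass through the possible local configurations (each light face type against the degree constraints forced by $f_1$), each handled by invoking either Lemma~\ref{lemma-5redu} to forbid an excellent neighbor outside $\{v_1,v_2\}$ or Lemma~\ref{lemma-mainredu} to forbid four good neighbors with distinct buds. The two-light-faces case is milder but still needs the two applications of Lemma~\ref{lemma-fiveoppgood} combined with Lemma~\ref{lemma-mainredu}.
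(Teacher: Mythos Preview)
Your overall strategy matches the paper's, and the ``no very light face'' branch is fine; the gap is in the very-light-face case.  You claim that if $f_1$ is very light then none of $f_2,\dots,f_5$ can be light, and for $f_2$ (and $f_5$) you assert that ``in each light type the other neighbor $v_3$ is then forced to have degree three''.  This is false for the light type $(\ge\!5,4,4,3)$: if $f_2=vv_3w'v_2$ is such a face with $\deg v_3=\deg w'=4$ and $\deg v_2=3$, then $v_3$ has degree four, not three.  In this configuration $v_3$ is $(v,C)$-good only via the stalk~(c) (the $4$-cycle $vv_3w'v_2$) with bud $v_2$, so $v_3$ is \emph{not} $(v,C)$-excellent (stalk~(c) is not on the excellent list), and Lemma~\ref{lemma-5redu} does not exclude it.  Your fallback ``two further $(v,C)$-good neighbors with no buds'' does not apply either, since from $f_2$ you obtain only the single new good neighbor $v_3$, and it carries a bud; together with $v_1,v_2$ that is only three good neighbors, so Lemma~\ref{lemma-mainredu} gives no contradiction.

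The paper explicitly allows this case: when $f_1$ is very light and $f_2$ is light, it shows $f_2$ must be exactly a $(5,4,4,3)$-face with $\deg v_3=4$, and then argues separately that $v$ sends nothing to $f_3$ or $f_4$ (using that $v_3,v_4,v_5$ cannot be $(v,C)$-good without a bud, via Lemma~\ref{lemma-mainredu} applied to $v_1,v_2,v_3$), so the total is at most $\tfrac12+\tfrac13+0+0+\tfrac16=1$.  You need to add this analysis rather than claim $f_2$ cannot be light.
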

\begin{proof}
Let $v_1$, \ldots, $v_5$ be vertices adjacent to $v$ in order, and let $f_1$, \ldots, $f_5$ be faces incident with
angles $v_1vv_2$, \ldots, $v_5vv_1$.  If none of $f_1$, \ldots, $f_5$ is light, then by Lemma~\ref{lemma-phase1},
$v$ sends at most $1/6$ to each incident face by (R3), and $\ch_2(v)\ge \ch_0(v)-5\times\frac{1}{6}=1/6$.
Hence, we can assume that $f_1$ is light.  By Lemma~\ref{lemma-fiveopplight}, neither $f_3$ nor $f_4$ is light,
and at most one of $f_2$ and $f_5$ is light; by symmetry, we can assume that $f_5$ is not light.

By Lemmas~\ref{lemma-5redu} and \ref{lemma-fiveoppgood}, if $f_1$ is very light, then $v$ does not send charge to both $f_3$ and $f_4$
due to (R3).  If $f_1$ is not very light, then $v$ sends at most $1/3$ to $f_1$ due to (R3).  In either case,
$v$ sends at most $\max(1/2+1/6,1/3+2\times \frac{1}{6})=2/3$ to $f_1$, $f_3$, and $f_4$
in total.  If $f_2$ is not light, then $v$ sends at most $1/6$ to each of $f_2$ and $f_5$, and
thus $\ch_2(v)\ge \ch_0(v)-2/3-2\times\frac{1}{6}=0$.

Hence, we can assume that $f_2$ is light.  By Lemma~\ref{lemma-5redu}, $f_1$ and $f_2$ are not both very light.
Let us first consider the case that neither $f_1$ nor $f_2$ is very light,
and thus $v$ sends at most $2\times \frac{1}{3}$ to $f_1$ and $f_2$ in total by (R3).
If $v$ does not send charge to all of $f_3$, $f_4$, and $f_5$, then
$\ch_2(v)\ge \ch_0(v)-2/3-2\times\frac{1}{6}=0$.  If $f$ sends charge to all of $f_3$, $f_4$, and $f_5$,
then by Lemma~\ref{lemma-fiveoppgood} the vertices $v_4$ and $v_5$ are $(v,C)$-excellent, and thus they are $(v,C)$-good using
no bud.  Since $f_1$ is light, both $v_1$ and $v_2$ are $(v,C)$-good (using at most one bud).
By Lemma~\ref{lemma-mainredu}, we conclude that $G$ contains a $(1,4)$-reducible induced subgraph disjoint from $C$
with at most $25$ vertices, which is a contradiction.

Hence, we can by symmetry assume that $f_1$ is very light (and $f_2$ is not),
and thus $f$ sends at most $1/2+1/3=5/6$ to $f_1$ and $f_2$ in total by (R3).
Since $v_3$ is not $(v,C)$-excellent by Lemma~\ref{lemma-5redu}, the inspection of the definition of a light face shows that
$f_2$ is a $(5,4,4,3)$-face with $\deg(v_3)=4$.  Since $v_3$ is not $(v,C)$-excellent, it follows that $f_3$ is not
a $(\ge\!5,\ge\!5,4,3)$-face.  Since neither $v_3$ nor $v_4$ is $(v,C)$-excellent, Lemma~\ref{lemma-phase1} implies that $v$
does not send any charge to $f_3$ by (R3).   Since $v_1$, $v_2$, and $v_3$ are $(v,C)$-good and only $v_3$ uses a bud $v_2$,
Lemma~\ref{lemma-mainredu} implies that neither $v_4$ nor $v_5$ is $(v,C)$-good using no bud.  By Lemma~\ref{lemma-phase1}, $v$ does not send
any charge to $f_4$.  Consequently, $\ch_2(v)\ge \ch_0(v)-5/6-1/6=0$.
\end{proof}

\begin{lemma}\label{lemma-nonneg}
Let $G$ with the outer face bounded by a cycle $C$ be as described at the beginning of Section~\ref{sec-discharging}.
If $G$ does not contain a $(1,4)$-reducible induced subgraph disjoint from $C$ with at most $31$ vertices, then
$\ch_2(v)\ge 0$ for all $v\in V(G)$ and $\ch_2(f)\ge 0$ for all $f\in F(G)$.
\end{lemma}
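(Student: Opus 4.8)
The plan is to establish $\ch_2(v)\ge 0$ for every vertex $v$ and $\ch_2(f)\ge 0$ for every face $f$, treating the vertices according to their type; since the hypothesis (no $(1,4)$-reducible induced subgraph disjoint from $C$ with at most $31$ vertices) is stronger than those of Lemma~\ref{lemma-phase1} and Corollary~\ref{cor-charge5}, both of these may be invoked freely. Faces and non-$C$ vertices of degree at most $5$ are essentially already done: by Lemma~\ref{lemma-phase1} the outer face and all faces of length at least five have non-negative $\ch_1$, and (R3) never removes charge from a face, so for these faces $\ch_2=\ch_1\ge 0$; a $4$-face $f$ has $\ch_1(f)<0$ only if it is very light, light, or of the third type of Lemma~\ref{lemma-phase1}, in which cases it has $n_r\ge 1$ incident rich vertices and hence receives exactly $-\ch_1(f)$ under (R3), giving $\ch_2(f)=0$, while $\ch_1(f)\ge 0$ gives $\ch_2(f)=\ch_1(f)\ge 0$. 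A non-$C$ vertex of degree at most $4$ is not rich, so it sends nothing under (R3) and has $\ch_1(v)\ge 0$ by Lemma~\ref{lemma-phase1}; non-$C$ vertices of degree $5$ are covered by Corollary~\ref{cor-charge5}. It therefore remains to treat vertices of $V(C)$ and non-$C$ vertices of degree at least $6$.

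For a vertex $v\in V(C)$ we have $\ch_1(v)=0$ if $\deg(v)=2$ (after (R0)) and $\ch_1(v)=\deg(v)-\tfrac73$ if $\deg(v)\ge 3$. Such a $v$ sends charge only under (R3) and only to a $4$-face $f$ with $\ch_1(f)<0$; by Lemma~\ref{lemma-phase1} every such $f$ is a $(\ge\!5,\ast,\ast,\ast)$-face, whose three vertices other than $v_1$ lie outside $V(C)$, so $v=v_1$ and both edges of $f$ at $v$ are non-$C$-edges. Since the two $C$-edges at $v$ are consecutive in the rotation around $v$ (they bound the outer face), at most $\deg(v)-3$ angles at $v$ are spanned by two non-$C$-edges, so $v$ sends at most $\tfrac12$ to each of at most $\deg(v)-3$ faces. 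For $\deg(v)\le 3$ no such angle exists and $\ch_2(v)=\ch_1(v)\ge 0$; for $\deg(v)\ge 4$ we get $\ch_2(v)\ge \deg(v)-\tfrac73-\tfrac{\deg(v)-3}{2}=\tfrac{\deg(v)}{2}-\tfrac56\ge 0$.

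Now let $v$ be a non-$C$ vertex of degree $d\ge 6$, so $\ch_1(v)=d-4$, and let $m$ and $\ell$ denote the numbers of very light and of non-very-light light $4$-faces incident with $v$. An inspection of the face types shows $v$ is the vertex $v_1$ of each such face; by (R3) it sends $\tfrac12$ to each of the $m$ very light faces, at most $\tfrac13$ to each of the $\ell$ light ones, and at most $\tfrac16$ to any other incident $4$-face, so the total it sends is at most $\tfrac{2m+\ell+d}{6}$ (there being at most $d$ incident faces). For $d\ge 8$ this is at most $\tfrac{3d}{6}\le d-4$. For $d=7$ it suffices to prove $m\le 4$: the two $v$-neighbours of a very light face have degree $3$ and hence are $(v,C)$-good via type-$(a)$ stalks, which have no bud, and the $m$ very light faces occupy $m$ distinct angles around $v$, so they force at least $\min(m+1,d)$ such good neighbours; thus $m\ge 5$ would give $d-1=6$ good neighbours no two using a bud, contradicting Lemma~\ref{lemma-mainredu} (the stalks being single edges, the resulting subgraph has at most $7$ vertices). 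With $m\le 4$ we obtain $2m+\ell\le m+(m+\ell)\le 4+7=11$, so $v$ sends at most $3=d-4$ and $\ch_2(v)\ge 0$.

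For $d=6$ the same argument gives $m\le 3$, and the remaining task — which I expect to be the main obstacle — is to show $2m+\ell\le 6$. This requires a local analysis of the cyclic arrangement of light and very light angles around $v$, in the spirit of (but somewhat more delicate than) the proof of Corollary~\ref{cor-charge5}, possibly also invoking Lemmas~\ref{lemma-5redu} and \ref{lemma-spec4}. The point is that each light face at $v$ supplies a good neighbour beyond the degree-$3$ ones coming from type-$(a)$ stalks: a degree-$4$ $v$-neighbour in a light face is $(v,C)$-good via a type-$(b)$ stalk, or, in a $(\ge\!5,4,\ast,\ast)$-face that has a degree-$4$ neighbour inside the $4$-cycle, via a type-$(c)$ stalk (which carries a bud, namely the degree-$3$ corner of that $4$-cycle). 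Keeping track of which $v$-neighbours these stalks use and of any shared buds, one shows that $2m+\ell\ge 7$ already yields $d-1=5$ $(v,C)$-good neighbours of $v$, no two using the same bud, whence Lemma~\ref{lemma-mainredu} produces a $(1,4)$-reducible induced subgraph disjoint from $C$ with at most $6d-5=31$ vertices — a contradiction. Hence $2m+\ell\le 6$, $v$ sends at most $2=d-4$, and $\ch_2(v)\ge 0$, completing the proof.
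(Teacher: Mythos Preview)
Your treatment of faces and of vertices of degree at most $5$ is correct and matches the paper. There are two genuine gaps.

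First, for $v\in V(C)$ with $\deg(v)\ge 3$, the claim that a $4$-face $f$ receiving charge from $v$ has its three other vertices outside $V(C)$ is false: a $(\ge\!5,\ge\!5,4,3)$-face, for instance, has two \emph{adjacent} rich vertices $v_1,v_2$, either of which may lie in $V(C)$; if $v$ plays the role of $v_2$ then its face-edge $v_1v_2$ can be a $C$-edge. So your count of at most $\deg(v)-3$ faces misses the two angles adjacent to the outer-face angle, and in particular your conclusion that $v$ sends nothing when $\deg(v)=3$ is unjustified. The paper repairs this by observing that any such face cannot be very light (every light face has its ``$\ge\!5$'' vertices non-adjacent on the $4$-cycle), so $v$ sends at most $\tfrac13$ to each of those two extra angles, giving $\ch_2(v)\ge(\deg(v)-\tfrac73)-\tfrac{\deg(v)-3}{2}-\tfrac23=\tfrac{\deg(v)-3}{2}\ge 0$.

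Second, for $d=6$ your reduction to the inequality $2m+\ell\le 6$ is sound, but the implication ``$2m+\ell\ge 7$ yields five $(v,C)$-good neighbours, no two sharing a bud'' is only asserted, and the bud bookkeeping you defer is exactly where the work lies (for instance, a $(\ge\!5,4,4,3)$-face supplies its degree-$4$ neighbour of $v$ only via a type-(c) stalk whose bud is the degree-$3$ corner of that $4$-cycle, and this bud is itself a neighbour of $v$ that another light face may already be claiming). The paper does not attempt this counting; instead it fixes a very light face $f_1$ and argues directly by cases on whether $f_3$ (and symmetrically $f_5$) is light, repeatedly invoking Lemma~\ref{lemma-mainredu} to rule out further light or very light faces in specific positions and then totalling the charge sent. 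Your approach looks plausible and might be completed with a comparable case analysis, but as written the $d=6$ case is a sketch rather than a proof.
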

\begin{proof}
We have $\ch_2(f)\ge 0$ by Lemma~\ref{lemma-phase1} if $f$ is the outer face or $|f|\ge 5$,
and by (R3) if $|f|=4$.  We also have $\ch_2(v)\ge 0$ if $v\not\in V(C)$ and $\deg(v)\le 4$
or if $v\in V(C)$ and $\deg(v)=2$ by Lemma~\ref{lemma-phase1}, and
if $v\not\in V(C)$ and $\deg(v)=5$ by Corollary~\ref{cor-charge5}.

If $v\in V(C)$ and $\deg(v)\ge 3$, then note that at most $\deg(v)-3$ faces incident with $v$ are very light (since $v$ has two neighbors
belonging to $C$), and thus
$\ch_2(v)\ge \ch_1(v)-(\deg(v)-3)/2-2\cdot \frac{1}{3}=(\deg(v)-7/3)-(\deg(v)-3)/2-2/3=(\deg(v)-3)/2\ge 0$
by Lemma~\ref{lemma-phase1} and the rule (R3).

Finally, let us consider the case that $v\not\in V(C)$ and $\deg(v)\ge 6$.  If $\deg(v)\ge 8$, then
$\ch_2(v)\ge \ch_1(v)-\deg(v)/2=(\deg(v)-4)-\deg(v)/2\ge 0$ by Lemma~\ref{lemma-phase1} and the rule (R3).
If $\deg(v)=7$, then note that $v$ has at most $5$ neighbors of degree three not belonging to $V(C)$
by Lemma~\ref{lemma-mainredu}, and thus $v$ is incident with at most $4$ very light faces.
Consequently,
$\ch_2(v)\ge \ch_0(v)-4\cdot \frac{1}{2}-3\cdot \frac{1}{3}=0$
by Lemma~\ref{lemma-phase1} and the rule (R3).

Suppose now that $\deg(v)=6$, let $v_1$, \ldots, $v_6$ be neighbors of $v$ in order,
and let $f_i$ denote the face incident with the angle $v_ivv_{i+1}$ for $i=1,\ldots, 6$ (where $v_7=v_1$).
If no incident face is very light, then $\ch_2(v)\ge \ch_0(v)-6\cdot \frac{1}{3}=0$
by Lemma~\ref{lemma-phase1} and the rule (R3).
Hence, we can assume that $f_1$ is very light, and thus $v_1$ and $v_2$ are vertices of degree three not belonging to $V(C)$.

If the face $f_3$ is light, then both $v_3$ and $v_4$ are $(v,C)$-good using at most one bud, incident with $f_3$.
By Lemma~\ref{lemma-mainredu}, we conclude that $v_5$ and $v_6$ are not $(v,C)$-good using only buds not incident with $f_3$,
and consequently $f_5$ and $f_6$ are not light and $f_4$ is not very light.  Furthermore, if $f_3$ is very light, then
$v_5$ is not $(v,C)$-good at all, and thus $f_4$ is not light.  Hence, at most $\max(2\times \frac{1}{3},1/2+1/6)=2/3$
is sent to $f_3$ and $f_4$ in total by (R3), and
$\ch_2(v)\ge \ch_0(v)-2\cdot \frac{1}{2}-2/3-2\cdot \frac{1}{6}=0$.

Hence, we can assume that $f_3$ is not light, and by symmetry $f_5$ is not light.  Analogously, if one of the faces
$f_2$, $f_4$, and $f_6$ is very light, we can assume that the other two are not light.  Hence, we send at most $1/2+2\times \frac{1}{6}<1$ to $f_1$, $f_3$, and $f_5$ in total,
and at most $\max (1/2+2\times \frac{1}{6}, 3\times \frac{1}{3})=1$ to $f_2$, $f_4$, and $f_6$ in total, and thus
$\ch_2(v)>\ch_0(v)-1-1=0$.
\end{proof}

\begin{proof}[Proof of Theorem~\ref{thm-main}]
Let $G_0$ be a plane triangle-free graph.  We apply Lemma~\ref{lemma-redu} (with $g=k=4$ and $b=31$) to show that $G_0$ is
weighted $\eps$-flexible (for fixed $\eps>0$ corresponding to the given values of $g$, $k$, and $b$)
with any assignment of lists of size $4$.  Since every subgraph of $G_0$ is planar and triangle-free,
it suffices to prove that $G_0$ contains a $(1,4)$-reducible induced subgraph with at most $31$ vertices.

Without loss of generality, we can assume that $G_0$ is connected, and by Lemma~\ref{lemma-redusmall}, we can assume
that all vertices of $G_0$ have degree at least three. 
Let $G$ with the outer face bounded by a cycle $C$ and the assignment $\ch_0$ of charges to its vertices and faces
be as described at the beginning of Section~\ref{sec-discharging}.
Recall that the sum of these charges is negative.  Redistributing the charge according to the rules (R0)---(R3)
gives us the charge assignment $\ch_2$ with the same (negative) sum of charges, and thus some vertex or face has negative charge.
Lemma~\ref{lemma-nonneg} implies that $G$ contains a $(1,4)$-reducible induced subgraph $H$ disjoint from $C$ with at most $31$ vertices.
As we observed before, $H$ is also a $(1,4)$-reducible induced subgraph of $G_0$.
\end{proof}

\bibliographystyle{siam}
\bibliography{req-trfree}

\end{document}